\documentclass[a4paper,11pt,oneside]{amsart}
\usepackage[DIV=14,BCOR=2mm,headinclude=false,footinclude=false]{typearea}
\usepackage{pgf, tikz}
\usepackage{pgfplots}
\usetikzlibrary{shapes.geometric}
\usetikzlibrary{arrows.meta,arrows}
\usepackage[latin1]{inputenc}
\usepackage{amsfonts}
\usepackage{amsmath}
\usepackage{amsthm}
\usepackage{amssymb}
\usepackage{mathtools}
\usepackage{amsbsy, hyperref}
\usepackage{mathrsfs}
\usepackage{mathabx}
\usepackage{dsfont}
\usepackage{tcolorbox}
\usepackage{comment}
\usepackage{bbm}
\usepackage{enumerate}
\usepackage[numbers]{natbib}
\usepackage{graphicx,color}
\usepackage{yfonts}
\usepackage[labelfont=rm,format=plain,indention=0cm,singlelinecheck=off,justification=raggedright,skip=2pt]{caption}
\usepackage[fit]{truncate}
\usepackage{placeins} 
\usepackage{flafter}  
\usepackage{float}

 \usepackage{todonotes}

\usepackage[framemethod=tikz]{mdframed}
\mdfsetup{linecolor=blue,backgroundcolor=gray!10,roundcorner=10pt,leftmargin=2pt,innerleftmargin=2pt,innerrightmargin=2pt}

\newcommand{\hh}{\mathfrak{h}}

\newcommand{\PP}{\mathbb{P}}
\newcommand{\QQ}{\mathbb{Q}}

\newcommand{\FF}{\mathcal{F}}

\newcommand{\CC}{\mathrm{C}}
\newcommand{\RR}{\mathrm{R}}
\newcommand{\TT}{\mathrm{T}}

\newcommand{\size}{\mathrm{Size\, }}
\newcommand{\ssize}{\widetilde{\mathrm{size}\,}}
\newcommand{\energy}{\mathrm{Energy \,}}

\newcommand{\BHT}{{\textrm{BHT}}}

\def\Xint#1{\mathchoice
   {\XXint\displaystyle\textstyle{#1}}%
   {\XXint\textstyle\scriptstyle{#1}}%
   {\XXint\scriptstyle\scriptscriptstyle{#1}}%
   {\XXint\scriptscriptstyle\scriptscriptstyle{#1}}%
   \!\int}
\def\XXint#1#2#3{{\setbox0=\hbox{$#1{#2#3}{\int}$}
     \vcenter{\hbox{$#2#3$}}\kern-.5\wd0}}

\def\aver#1{\Xint-_{#1}}

\usetikzlibrary{calc,matrix,backgrounds}

\pgfdeclarelayer{overlay}
\pgfsetlayers{overlay,background,main}

\tikzset{circle/.style = {rounded corners,line width=1bp,color=#1}}%

\DeclareSymbolFont{yhlargesymbols}{OMX}{yhex}{m}{n} 
\DeclareMathAccent{\yhwidehat}{\mathord}{yhlargesymbols}{"62}

\makeatletter

\def\Xint#1{\mathchoice
   {\XXint\displaystyle\textstyle{#1}}%
   {\XXint\textstyle\scriptstyle{#1}}%
   {\XXint\scriptstyle\scriptscriptstyle{#1}}%
   {\XXint\scriptscriptstyle\scriptscriptstyle{#1}}%
   \!\int}
\def\XXint#1#2#3{{\setbox0=\hbox{$#1{#2#3}{\int}$}
     \vcenter{\hbox{$#2#3$}}\kern-.5\wd0}}

\def\aver#1{\Xint-_{#1}}

\theoremstyle{plain}
\newtheorem{theorem}{Theorem}[section]

\newtheorem{lemma}[theorem]{Lemma}

\newtheorem{proposition}[theorem]{Proposition}

\newtheorem{conjecture}[theorem]{Conjecture}
\theoremstyle{definition}
\newtheorem{definition}[theorem]{Definition}

\newtheorem{remark}[theorem]{Remark}

\title[On a bilinear square function built on a collection of strips]{On the boundedness for the bilinear quadratic functional given by arbitrary strips}

\author[F. Bernicot]{Fr\'ed\'eric Bernicot}
\address{CNRS - Nantes Universit\'e \\ Laboratoire de Math\'ematiques Jean Leray \\ 2, Rue de la Houssini\`ere F-44322 Nantes Cedex 03, France}
\email{frederic.bernicot@univ-nantes.fr}

\date{\today}

\subjclass[2000]{42A45}
\keywords{Bilinear Fourier multipliers, Orthogonality}

\begin{document}

\begin{abstract} This study provides initial results on the boundedness of the (smooth) bilinear quadratic functional defined by an arbitrary collection of disjoint strips. The square function under consideration is a combination between the well-known Rubio de Francia square function from the linear setting with the bilinear Hilbert transform's singularity structure, which involves modulation invariance.
\end{abstract}

\maketitle

\tableofcontents

\section{Introduction}

In this work, we aim to give a first result concerning the boundedness of the bilinear square function, built on the $\BHT$ (bilinear Hilbert transform) structure and an arbitrary collection of strips. 

Let us first review this topic which originates to the work of Rubio de Francia \cite{RF}: consider $\Omega:=(\omega)_{\omega\in\Omega}$ an arbitrary collection of disjoint open intervals of the real line, then the associated (sub)linear square function
$$ RF_{\Omega}(f):= \left(\sum_{\omega \in \Omega} |\pi_{\omega}(f)|^2 \right)^{1\over 2}$$
is bounded in $L^p({\mathbb R})$ for every $p\in [2,\infty)$,
where here $\pi_{\omega}$ stands for the non-smooth frequency projection on $\omega$
$$ \pi_{\omega}(f):= x \mapsto \int e^{ix\xi} {\bf 1}_{\omega}(\xi) \hat{f}(\xi) d\xi.$$ This important result encodes the orthogonality of the projections $\pi_{\omega}$ due to the disjointness in frequency, in any Lesbesgue spaces $L^p$, $p\in[2,\infty)$. If for some specific configurations (for example the dyadic intervals $\omega_k:=(2^k,2^{k+1})$) one can enlarge the range of boundedness, it is known that for the general statement the range $[2,\infty)$ is the maximal one.

We refer the reader to \cite{BB} for more details about the literature on this topic and to its bilinear version. In fact, if on the real line, the frequency domain has a simple geometry (described by a collection of intervals), in the bilinear context the frequency domain is the plane ${\mathbb R}^2$ and there could be far more subtle geometric aspects (curvature, bi-parameter, invariance by non-trivial modulation, ...).

So now, consider ${\mathcal U}:=(U)_{U\in{\mathcal U}}$ a collection of disjoint arbitrary open subsets of ${\mathbb R}^2$, we can consider the bilinear quadratic functional 
$$ RF_{{\mathcal U}}(f,g):= \left(\sum_{\omega \in \Omega} |\pi_{U}(f,g)|^2 \right)^{1\over 2}$$
where now $\pi_{U}$ is the bilinear projection
$$ \pi_{U}(f,g):= x \mapsto \int e^{ix(\xi+\eta)} {\bf 1}_{U}(\xi,\eta) \hat{f}(\xi) \hat{g}(\eta) d\xi d\eta.$$

For more details about the literature, we move the reader to the introduction in \cite{BB}, where the problem has been solved for ${\mathcal U}$ beeing an arbitrary collection of squares in the smooth case in \cite{BB} (i.e. with a smooth cutoff as symbol instead of the non-smooth characteristic function) and then later in the non-smooth case in \cite{BV1}. The result as also been extended to the setting of an arbitrary collection of rectangles, see \cite{BV2}. 

This current work is dedicated to the situation where ${\mathcal U}$ is a disjoint collection of parallel strips. Indeed, the geometry of a strip is motivated by the geometry of the $\BHT$ (bilinear Hilbert transform): we identify a strip $S$ (parallel to the first diagonal) to an interval $\omega_{S}$ by
$$ S:=\{(\xi,\eta)\in{\mathbb R}^2,\ \xi-\eta \in \omega_{S}\}.$$
One can then consider its (non-smooth) bilinear projection
$$ (f,g) \rightarrow \iint e^{ix(\xi+\eta)} \hat{f}(\xi) \hat{g}(\eta) {\bf 1}_{S}(\xi,\eta)d\xi d\eta,$$
which is equal to a linear combination between the identity and some modulated $\BHT$'s,
where the bilinear Hilbert transform $\BHT$ is given by
$$ \BHT(f,g)(x) = \iint e^{ix(\xi+\eta)} \hat{f}(\xi) \hat{g}(\eta) {\bf 1}_{\xi-\eta\geq 0} d\xi d\eta.$$

We recall that the $\BHT$ is bounded (see \cite{LT1,LT2}) from $L^{p_1} \times L^{p_2}$ to $L^{p}$ for every exponents $(p_1,p_2,p)$ such that their inverse belong to the range ($p'$ will always be the dual exponent of $p$, defined by $p'=\frac{p}{p-1}$ and which can be negative since $p$ can be smaller than $1$)
$${\textrm Range}_{\BHT}:=\Big\{(\frac{1}{p_1},\frac{1}{p_2},\frac{1}{p'})\in [0,1)^2 \times (-1,1),\ 0<1-\frac{1}{p'}=\frac{1}{p_1}+\frac{1}{p_2}<\frac{3}{2}\Big\}.$$
Following the analogy of Rubio de Francia's result, it is then natural to try to understand if there is also some orthogonality aspects in this bilinear setting: to an arbitrary disjoint collection of intervals, identify the collection of disjoint strips ${\mathcal S}:=(S)_{S\in{\mathcal S}}$ (such that $(\omega_S)_{S\in{\mathcal S}}$ are disjoint) and to look for boundeness of the quadratic bilinear functional 
$$ \Pi_{\mathcal S}(f,g) := \Big( \sum_{S\in{\mathcal S}} |\pi_S(f,g)|^2 \Big)^{1/2}.$$
This question has been studied by the author in \cite{B} under the condition that the strips have all the same length, which allowed to use a $\BHT$ time-frequency analysis, combined with some $\ell^2$-vector valued arguments.

Up to now, there is no results in the literature dealing with an arbitrary collection of strips and such a question seems to be quite challenging. We also move the reader to the introduction in \cite{BBLV,BBLV2} and mostly \cite[Paragraph III in Section 1.2.3]{BBLV2}, where different problems (even more difficult) related to this question have been described.

\bigskip

This work aims to provide a first answer, dealing with the smooth situation. So we start with ${\mathcal S}:=(S)_{S\in {\mathcal S}}$ a collection of {\it disjoint} strips $S$, that we identify by its interval $\omega_{S}$ with
$$ S:=\{(\xi,\eta)\in{\mathbb R}^2,\ \xi-\eta \in \omega_{S}\}.$$

Associated to $S$, we build the smooth bilinear projector
$$ \pi_{S}(f,g):= \iint e^{ix(\xi+\eta)} \hat{f}(\xi) \hat{g}(\eta) \chi_{\omega_S}(\xi-\eta) d\xi d\eta$$
with $\chi_{\omega_S}$ a compactly supported function in $\omega_{S}$ and adapted to it (i.e. for sufficiently enough integers $\alpha$, $\|\omega_{S}^{(\alpha)}\|_\infty\lesssim |\omega_S|^{-\alpha}$).
To such a collection ${\mathcal S}$, one can consider the bilinear square function
$$ \Pi_{{\mathcal S}}(f,g):= \Big( \sum_{S\in{\mathcal S}} |\pi_S(f,g)|^2 \Big)^{1/2}.$$

If in the linear setting the $L^2$-result is trivial (by $L^2$-orthogonality), there is no similar easy results in the bilinear setting. The linear $L^2$-range (where the exponent and its dual is larger than $2$) has a bilinear counterpart which is the (strict) local$-L^2$ range:
$$ {\textrm Range}_{\textrm{local}-L^2}:=\Big\{(\frac{1}{p_1},\frac{1}{p_2},\frac{1}{p'})\in (0,1)^3,\ 0<\frac{1}{p'},\frac{1}{p_1},\frac{1}{p_2}<\frac{1}{2} \quad \textrm{and} \quad 1=\frac{1}{p'}+\frac{1}{p_1}+\frac{1}{p_2}\Big\}.$$

Our main result is the following:

\begin{theorem} \label{thm:main} Let ${\mathcal S}$ be a disjoint collection of strips: $(\omega_S)_{S\in{\mathcal S}}$ are pairewise disjoint. Then for every $\epsilon>0$, there exists a $\epsilon^2$-neighborhood $V_{\epsilon}$ of the local$-L^2$ range such that for every exponents $(\frac{1}{p_1},\frac{1}{p_2},\frac{1}{p'})\in V_{\epsilon}$ satisfying $1=\frac{1}{p'}+\frac{1}{p_1}+\frac{1}{p_2}$, the square function $\Pi_{\mathcal S}$ is bounded from $L^{p_1}({\mathbb R}) \times L^{p_2}({\mathbb R})$ to $L^{p}({\mathbb R})$ with $p:=\frac{p'}{p'-1}$ and a uniform bound (with respecto to ${\mathcal S}$) of order
$$ \|\Pi_{\mathcal S}\|_{L^{p_1} \times L^{p_2} \to L^{p}} \lesssim_{\epsilon} (\sharp \mathcal S)^{\epsilon}.$$
\end{theorem}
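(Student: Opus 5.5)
The strategy is to interpolate between two bounds. At the centre of the local-$L^2$ range we aim for an estimate that loses only a small power of $\sharp\mathcal S$, or even none. Near the boundary we accept a crude bound that loses at most a full power $\sharp\mathcal S$. Complex or multilinear interpolation with small weight on the crude bound then gives the loss $(\sharp\mathcal S)^{\epsilon}$ on an $\epsilon^2$-neighbourhood.

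First we linearise the square function. By duality and Khintchine's inequality (randomising signs $\varepsilon_S=\pm1$), it suffices to bound uniformly in the signs the bilinear multiplier with symbol
$$ m_\varepsilon(\xi,\eta)=\sum_{S\in\mathcal S}\varepsilon_S\,\chi_{\omega_S}(\xi-\eta). $$
For the range $p\geq 1$ this is enough. Alternatively, we keep the vector-valued form $(f,g)\mapsto(\pi_S(f,g))_S$ into $L^p(\ell^2)$.

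The crude bound is obtained directly. Each $\pi_S$ is a smooth truncation of a modulated $\BHT$; indeed $\chi_{\omega_S}(\xi-\eta)$ is a Mikhlin-type symbol adapted to the degenerate line direction. By \cite{LT1,LT2}, and uniformly with respect to modulation and dilation, $\pi_S$ is bounded on the whole ${\textrm Range}_{\BHT}$, which contains a full neighbourhood of the local-$L^2$ range. The triangle inequality $\Pi_{\mathcal S}\le\sum_S|\pi_S|$ (or the $\ell^2$ bound $\le(\sharp\mathcal S)^{1/2}\sup_S$) then gives $\|\Pi_{\mathcal S}\|\lesssim \sharp\mathcal S$ on a fixed neighbourhood $W$ of the local-$L^2$ range.

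The key step is a bound with only a small loss inside the local-$L^2$ range itself. Here we adapt the BHT time-frequency analysis of \cite{B}. We decompose each $\chi_{\omega_S}(\xi-\eta)$ into tri-tiles at the scale $|\omega_S|$, so that each strip contributes a model sum over trees of scale $|\omega_S|^{-1}$ in time. Because the strips are disjoint in the $\xi-\eta$ variable, the third frequency components are disjoint across different $S$. Combining this with the linear Rubio de Francia theorem \cite{RF} applied to the collection $(\omega_S)$ gives a linear square-function control of the dual function, and hence an $\ell^2$-orthogonality for the sizes in the third slot. We then run the standard size/energy argument with the energy measured in $\ell^2$ over $S$.

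The difficulty is that the strips have arbitrary lengths, so the trees from different $S$ live at different scales. The resulting size lemmas then lose in the number of distinct scales. We propose to group strips by dyadic length: there are at most $\sharp\mathcal S$ groups, and within each group the equal-length argument of \cite{B} applies uniformly. We then sum across the groups with the loss controlled by $(\sharp\mathcal S)^{\delta}$ via almost-orthogonality in the local-$L^2$ setting (where all exponents exceed $2$, so $L^2$ quasi-orthogonality of the outputs is available). This yields $\|\Pi_{\mathcal S}\|\lesssim_\delta (\sharp\mathcal S)^{\delta}$ on the local-$L^2$ range, with the constant possibly degenerating at its boundary. We expect this cross-scale summation to be the main obstacle.

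Finally, we interpolate multilinearly, using Riesz--Thorin on the sign-linearised operators, between the $(\sharp\mathcal S)^{\delta}$ bound at points of the local-$L^2$ range at distance about $\epsilon$ from its boundary and the $\sharp\mathcal S$ bound on $W$. Taking weight $\theta\approx\epsilon$ on $W$ extends the estimate a distance $\epsilon\cdot\epsilon=\epsilon^2$ beyond the local-$L^2$ range, with the loss $(\sharp\mathcal S)^{\delta+\theta}\lesssim(\sharp\mathcal S)^{2\epsilon}$. Renaming $\epsilon$ concludes.
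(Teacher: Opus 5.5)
\textbf{There is a genuine gap.} Your ``key step'', the bound $\|\Pi_{\mathcal S}\|\lesssim_\delta(\sharp\mathcal S)^{\delta}$ inside the local-$L^2$ range, is the entire content of the theorem. The mechanism you propose for it does not work, for two reasons.

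First, your claim that the third frequency components are disjoint across different strips is false. A strip only constrains $\xi-\eta\in\omega_S$. The third tile of a tri-tile satisfies $\omega_{s_3}\subset-(\omega_{s_1}+\omega_{s_2})$, so it lives in the output variable $\xi+\eta$, which sweeps all of $\mathbb R$ along every strip. For any two strips the intervals $\omega_{s_3}$ therefore overlap heavily. There is also no function whose Fourier support is split by the intervals $\omega_S$, so Rubio de Francia's theorem has nothing to act on. Concretely, after your Khintchine linearisation the dual function $h$ is the same for every strip. Bessel's inequality only gives $\sum_{s:\,\omega_s\cap S\neq\emptyset}|\langle h,\phi_{s_3}\rangle|^2\lesssim\|h\|_2^2$ for each $S$ separately, and summing over $S$ costs a full factor $\sharp\mathcal S$.

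Second, grouping strips by dyadic length does not reduce the problem. In the worst case every group is a single strip, so the within-group appeal to \cite{B} is trivial and everything rests on the cross-group summation. You leave that to an unspecified ``almost-orthogonality of the outputs''. The outputs $\pi_S(f,g)$ have frequencies in $\xi+\eta$, which again does not separate the strips. Moreover $p\in(1,2)$ in this range, so the trivial $\ell^p\subset\ell^2$ summation of $K$ pieces loses $K^{1/p}$ with $\frac1p>\frac12$. So the final interpolation has no small-loss estimate to interpolate from.

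The paper avoids both problems by never separating scales and never needing orthogonality across strips. It first reduces to well-separated strips, so each tri-tile meets exactly one strip. It then reorganises all tri-tiles into rows and columns, as for squares in \cite{BB}, and keeps the dual as an $\ell^2$-valued sequence $\hh=(h_S)$, so summing $\|h_S\|_2^2$ over $S$ is free. The new ingredients are:
\begin{itemize}
\item a size for $\hh$ given by a weighted $\ell^{r'}$ sum over a tree, with $r>2$ close to $2$ and $\alpha=\frac12-\frac1r$;
\item an energy for $\hh$ of $L^2(\ell^2)$ type.
\end{itemize}
Bessel is used only inside a single strip, where all tiles have the same scale. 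The number of strips enters only through H\"older's inequality over $\mathcal S$ between $\ell^{r'}$ and $\ell^2$, which costs $(\sharp\mathcal S)^{\alpha}$ in Propositions~\ref{prop:size} and~\ref{prop:energy2}. The size/energy decomposition and the localisation step then give restricted weak-type bounds for $\max_i\nu_i<\frac12+2\alpha^2$ with loss $(\sharp\mathcal S)^{3\alpha/2}$. So the $\epsilon^2$-neighbourhood comes from the exponent bookkeeping with $\epsilon=\frac32\alpha$, not from interpolating with the crude bound; the paper uses that interpolation only for Theorem~\ref{thm:main2}.
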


\begin{remark} So our result is the first result dealing with an arbitrary collection of disjoint strips (up to an $\epsilon$-loss) and the statement can be thought as the bilinear version of the $L^2$-boundedness in the linear context. As we will see, the proof is far more delicate and subtle than in the linear setting !
\end{remark}

As it will be proved, $V_{\epsilon}$ can be taken for example of the form
$$ V_{\epsilon}:=\{(\frac{1}{p_1},\frac{1}{p_2},\frac{1}{p'})\in(0,1)^2 \times (-1,1),\ \max(\frac{1}{p_1},\frac{1}{p_2},\frac{1}{p'}) < \frac{1}{2}+\frac{8}{9}\epsilon^2\}.$$
We already know that
$$ \Pi_{\mathcal S}(f,g) \lesssim (\sharp {\mathcal S})^{1 \over 2} \cdot M(f,g)$$
where $M$ is the bilinear maximal function (since for every $S\in {\mathcal S}$, the smooth bilinear operator $\pi_{S}$ is pointwisely controled by $M$).
We know (see \cite{Lacey}) that $M$ is bounded in the $\BHT$ range ${\textrm Range}_{\BHT}$. So by interpolation we get the following:

\begin{theorem} \label{thm:main2} Fix exponents with $(\frac{1}{p_1},\frac{1}{p_2},\frac{1}{p'})\in {\textrm Range}_{\BHT}$. One has for every $\epsilon>0$,
$$ \| \Pi_{\mathcal S} \|_{L^{p_1} \times L^{p_2} \to L^p} \lesssim (\sharp {\mathcal S})^{\sigma(p_1,p_2,p) + \epsilon},$$
with
$$ \sigma(p_1,p_2,p):=\max\Big(\frac{1}{p_1}-\frac{1}{2},0\Big) + \max\Big(\frac{1}{p_2}-\frac{1}{2},0\Big) + \max\Big(\frac{1}{p'}-\frac{1}{2},0\Big).$$
\end{theorem}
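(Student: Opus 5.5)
Theorem~\ref{thm:main2} follows by multilinear complex interpolation between two estimates we already have. The first is Theorem~\ref{thm:main}, which gives bounds with growth $(\sharp\mathcal S)^{\epsilon}$ on the $\epsilon^2$-neighbourhood $V_\epsilon$ of the local$-L^2$ range. The second is the trivial pointwise bound
$$\Pi_{\mathcal S}(f,g)\lesssim (\sharp\mathcal S)^{1/2} M(f,g),$$
which, combined with the boundedness of $M$ on ${\textrm Range}_{\BHT}$ \cite{Lacey}, gives growth $(\sharp\mathcal S)^{1/2}$ on the whole $\BHT$ range.

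To make interpolation available, we linearize the square function. For a finite family $(\varepsilon_S)$ with $\sum_S|\varepsilon_S(x)|^2\le 1$, consider
$$T_{\varepsilon}(f,g):=\sum_S \varepsilon_S\,\pi_S(f,g).$$
Equivalently, we view $(f,g)\mapsto(\pi_S(f,g))_S$ as a bilinear operator into $L^p(\ell^2)$. This is a bilinear operator with values in a mixed-norm space, so complex (or real, restricted-type) multilinear interpolation applies. For $p<1$ we use the multilinear Marcinkiewicz/restricted weak-type interpolation of the form employed for $\BHT$, which handles quasi-Banach targets, and then upgrade back to strong bounds away from the endpoints.

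Next we examine the exponent $\sigma$. The function $\sigma$ is convex and piecewise linear in $(1/p_1,1/p_2,1/p')$, vanishes exactly on the closed local$-L^2$ region, and grows linearly outside it. Fix a target point $P$ in ${\textrm Range}_{\BHT}$ that lies outside the local$-L^2$ range. We write
$$P=(1-\theta)Q+\theta R,$$
where $R$ is a point of ${\textrm Range}_{\BHT}$ chosen near the boundary of that range, in the direction away from the local$-L^2$ cube, and $Q$ lies in the interior of $V_\epsilon$ near the face of the cube. Interpolation then gives the bound
$$(\sharp\mathcal S)^{(1-\theta)\epsilon+\theta/2}.$$

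The main obstacle is the geometric check that $\theta/2$ can be made at most $\sigma(P)+O(\epsilon)$. Here $\theta$ is the fraction of the way from the cube to $R$. Along a coordinate direction, say when only $1/p_1$ exceeds $1/2$, we have $\theta\approx (1/p_1-1/2)/(1/p_1(R)-1/2)$. Because $1/p_1(R)$ can approach $1$, this gives $\theta/2\approx 1/p_1-1/2=\sigma(P)$, matching the claim. When two coordinates exceed $1/2$, the contributions add, and we choose $R$ near the corresponding vertex of the $\BHT$ region. There both excesses can approach $1/2$ simultaneously, giving the sum of the maxima. We must verify that such $R$ remain inside ${\textrm Range}_{\BHT}$. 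This uses $1/p_1+1/p_2<3/2$ and $1/p'>-1$, and any shortfall is absorbed into the extra $\epsilon$ by taking $R$ only approximately extremal. The remaining cost $(1-\theta)\epsilon$ together with this slack is bounded by $\epsilon$ after renaming.
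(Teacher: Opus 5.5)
Your strategy is the paper's: Theorem~\ref{thm:main2} is obtained there in one line, by interpolating the $(\sharp\mathcal S)^{\epsilon}$ bound of Theorem~\ref{thm:main} near the local$-L^2$ triangle with the $(\sharp\mathcal S)^{1/2}$ bound coming from $\Pi_{\mathcal S}\lesssim(\sharp\mathcal S)^{1/2}M$ and Lacey's theorem. Your linearization and your one-exponent case are fine.

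The step that fails as written is the case $\frac{1}{p_1},\frac{1}{p_2}>\frac12$ (hence $\frac{1}{p'}<0$). Write $P=(x_1,x_2,x_3)=(\frac1{p_1},\frac1{p_2},\frac1{p'})$. No point of ${\textrm Range}_{\BHT}$ has both excesses near $\frac12$. The constraint $\frac1{p_1}+\frac1{p_2}<\frac32$ forces $(\frac1{p_1}-\frac12)+(\frac1{p_2}-\frac12)<\frac12$, so the ``shortfall'' is of order one and cannot be absorbed into $\epsilon$. Your heuristic is also internally inconsistent. If such an $R$ existed, collinearity with $Q\approx(\frac12,\frac12,0)$ would give $\theta/2\approx x_1-\frac12$, roughly half of $\sigma(P)$. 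That is impossible: $\sigma$ is convex, vanishes on the local$-L^2$ triangle and is at most $\frac12$ on ${\textrm Range}_{\BHT}$. Hence $\sigma(P)\le\theta/2$ for every admissible splitting, and no interpolation between these two bounds can beat $\sigma$.

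The correct check in that region uses the edge $\frac1{p'}=-\frac12$, where $\sigma(P)=x_1+x_2-1=-x_3$. Take $Q=(\frac12,\frac12,0)$ and $R=Q+\frac{1}{2\sigma(P)}(P-Q)$. The third coordinate of $R$ is $-\frac12$. Its first two coordinates are $\frac12+\frac{x_i-1/2}{2\sigma(P)}$, and these lie in $(\frac12,1)$ precisely because the other coordinate $x_j$ exceeds $\frac12$. So $\theta=2\sigma(P)\in(0,1)$. Pushing $R$ and $Q$ slightly into the open ranges costs only an arbitrarily small amount in the exponent, with $\epsilon$-dependent but $\sharp\mathcal S$-independent constants.

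A clean way to do the whole geometric verification at once works in the coordinates $(x_1,x_2)$. There the closed $\BHT$ range is $[0,1]^2\cap\{x_1+x_2\le\frac32\}$. It splits into five pieces:
\begin{itemize}
\item the local$-L^2$ triangle $\{x_1,x_2\le\frac12,\ x_1+x_2\ge\frac12\}$;
\item the square $[\frac12,1]\times[0,\frac12]$;
\item the square $[0,\frac12]\times[\frac12,1]$;
\item the triangle $\{x_1,x_2\ge0,\ x_1+x_2\le\frac12\}$;
\item the triangle $\{x_1,x_2\ge\frac12,\ x_1+x_2\le\frac32\}$.
\end{itemize}
On each piece $\sigma$ is affine. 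It equals $0$ at the vertices lying on the local$-L^2$ triangle and $\frac12$ at the remaining vertices, which are vertices of the $\BHT$ pentagon. Write $P$ as a convex combination of the vertices of its piece and group the two kinds of vertices. This produces $Q$ in the triangle and $R$ in the closed $\BHT$ range with $\theta=2\sigma(P)$ exactly.
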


About the optimality of the range (up to the $\epsilon$ parameter as small as we want), one coud follow an analogy between the linear theory and the bilinear theory, and  by observing that the quadratic functionals are not symmetric, we could expect the following:

\begin{conjecture} For every collection of disjoint strips ${\mathcal S}$, every $\epsilon>0$ and every exponents such that $p_1,p_2\in [2,\infty)$ then uniformly in the collection ${\mathcal S}$, we might have
$$ \| \Pi_{\mathcal S} \|_{L^{p_1} \times L^{p_2} \to L^p} \lesssim (\sharp {\mathcal S})^{\epsilon},$$
for $p$ given by $\frac{1}{p}=\frac{1}{p_1}+\frac{1}{p_2}$ (and maybe even without the loss $\epsilon$).
\end{conjecture}

\begin{remark}
\begin{itemize}
\item[(a)] A similar conjecture could also be formulated in the context of \cite{BB}, where an arbitrary collection of squares is considered; however, even this simpler case remains unsolved. While the proof does not rely on symmetric arguments for the functions $f,g$ and for the dualizing functions $\hh$, it appears that the proof only works in the stated range which is symmetric. 

\item[(b)] In the context where the strips have same (or equivalent) width, then in \cite{GPS} it has been proved the estimate for $p=2$, without any loss $\epsilon$.

\item[(c)] Moreover all these questions can be extended to the non-smooth context, where the symbol $\chi_{\omega_S}$ in $ \pi_S$ is replaced by the characteristic function ${\bf 1}_{\omega_S}$. In such a non-smooth setting, it is only known the boundedness in the strict local-$L^2$ range for strips of the same width -- see \cite{B} and an estimate is proved without any loss.
\end{itemize}
\end{remark}

\bigskip

The proof of Theorem \ref{thm:main} will follow the approach developped in \cite{BB} for squares, and that we adapt here for strips. It goes through a {\it size/energy} argument after having defined the suitable quantities and a reshuffling of the frequency geometry into {\it columns} and {\it rows}. With respect to \cite{BB}, the main difference is the following one. In \cite{BB} we were considering $\ell^{r}$-functionals over a collection of squares, for $r>2$ -- which by duality means to consider some $\ell^{r'}$-sequence of functions. If a strip can be discretized by a collection of squares, the main obstacle will be that we cannot sum over these square in $\ell^{r'}$, since $r'<2$. So the main idea will be to define a new {\it energy} for the dual functions which will be a $L^2(\ell^2)$ quantity. Indeed, since we can only hope to sum in $\ell^2$ over the squares along a fixed strip, then by 'homogenity' we have to sum in $\ell^2$ the strips as well. That will be done up to a loss in terms of $\sharp {\mathcal S}$ to the power $\frac{3}{2} \alpha$ with $\alpha:=\frac{1}{2}-\frac{1}{r}$ and it will be necessary in several occasions that $\alpha>0$ (which will then become the $\epsilon:=\frac{3}{2}\alpha>0$ in the statement).

\section{Discrete model operators and interpolation}

\subsection{Reduction to a well-distributed collection of strips}

In the linear setting, namely Rubio de Francia's work \cite{RF}, a first step is to reduce the study to the case of a {\it well-separated} collection of intervals, which is a slightly stronger condition than the disjointness: a collection of interval $(\omega)_{\omega\in \Omega}$ is said to be {\it well-separated} if\footnote{The numerical constant $2$ is not important and could be replaced by any constant $k>1$.}
$$ \textrm{the collection $(2\omega)_{\omega\in\Omega}$ is pairwise disjoint.}$$

In this current work, we will need to discretize the strips by squares (in order to apply some time-frequency analysis involving {\it tiles}) which will have to be mutually disjoint (when coming from different strips). Hence we will need to use the property of {\it well-separated} and we first explain how in the smooth case, this is easily doable to assume.

\begin{proposition} To prove Theorem \ref{thm:main}, we may assume that the collection of strips or more precisely of intervals $(\omega_S)_{S\in{\mathcal S}}$ is {\it well-distributed}.
\end{proposition}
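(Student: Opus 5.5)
We reduce the general disjoint collection to a bounded number of well-separated subcollections, using the fact that each smooth cutoff $\chi_{\omega_S}$ is compactly supported strictly inside $\omega_S$ and adapted to it.

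First, for each $S$, the support of $\chi_{\omega_S}$ is a compact subset of the open interval $\omega_S$. So we may replace $\omega_S$ by a smaller interval $\tilde\omega_S\subset\omega_S$ that still contains this support. Adaptedness says $\chi_{\omega_S}$ varies on scale $|\omega_S|$, so we cannot in general take $\tilde\omega_S$ with $|\tilde\omega_S|\gtrsim|\omega_S|$ and $2\tilde\omega_S\subset\omega_S$ without more care. Instead we decompose. Fix a smooth partition of unity on $\omega_S$ adapted to its Whitney decomposition: the intervals $I\subset\omega_S$ with $|I|\simeq \mathrm{dist}(I,\omega_S^c)$, so that $3I\subset\omega_S$. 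This writes
$$\chi_{\omega_S}=\sum_{I\in W(S)}\chi_{\omega_S}\varphi_I .$$
Only finitely many $I$ meet the support of $\chi_{\omega_S}$, but that number is not uniform. This is the main obstacle: the naive Whitney splitting would multiply the number of strips by a non-uniform factor and destroy the $(\sharp\mathcal S)^\epsilon$ bound.

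The cleaner route is to use the smoothness assumption in the form that is presumably intended: $\chi_{\omega_S}$ is supported in a fixed dilate $c\,\omega_S$ with $c<1$ (e.g.\ $c=1/2$), as is standard for "adapted" bumps in \cite{BB}. Under this convention the argument is direct.

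1. Set $\tilde\omega_S:=c\,\omega_S$. Then $\frac{1}{c}\tilde\omega_S=\omega_S$, and these enlarged intervals are pairwise disjoint, so $(\tilde\omega_S)_S$ is separated with constant $k=1/c>1$. By the footnote, any constant $k>1$ is acceptable.

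2. If one insists on $k=2$ with $c$ close to $1$, split $\chi_{\omega_S}$ by a smooth partition of unity into $N=N(c)$ bumps $\chi_{S,j}$. Each $\chi_{S,j}$ is adapted to a subinterval $\omega_{S,j}$ of length $\simeq_c|\omega_S|$ with $3\omega_{S,j}\subset\omega_S$, and moreover the $\omega_{S,j}$ are at mutual distance $\gtrsim|\omega_{S,j}|$ for fixed $j$ and distinct $S$. Note that $\omega_{S,j}\subset\omega_S$ and the $\omega_S$ are disjoint, so two intervals from different $S$ can only be close when they sit near a common endpoint. To separate those, we further sort the indices by parity or use a bounded number of colors, which gives families $\mathcal S_{j,\ell}$, $\ell\le L$, each well-separated.

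3. By the triangle inequality in $\ell^2$,
$$\Pi_{\mathcal S}(f,g)\le\sum_{j,\ell}\Pi_{\mathcal S_{j,\ell}}(f,g).$$
In the quasi-Banach case $p<1$ we use the $p$-triangle inequality instead. Each $\mathcal S_{j,\ell}$ has at most $\sharp\mathcal S$ elements, and the number of pieces $NL$ is an absolute constant. Therefore the bound $(\sharp\mathcal S_{j,\ell})^\epsilon$ for well-distributed collections yields $\lesssim(\sharp\mathcal S)^\epsilon$ for $\mathcal S$.

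The delicate step is the coloring in step 2. Adjacent disjoint intervals of wildly different lengths have shrunk pieces near their common endpoint that are separated only relative to the smaller piece. The condition $2\omega\cap2\omega'=\emptyset$ requires the gap to exceed half the larger length. I would handle this by choosing the shrink factor so that the gap between $c\,\omega_S$ and $c\,\omega_{S'}$ is at least $\frac{1-c}{2}\max(|\omega_S|,|\omega_{S'}|)$, which holds automatically since each shrunk interval leaves a margin proportional to its own length. This makes the dilation constant $k=1+(1-c)$ work uniformly, which suffices by the footnote.
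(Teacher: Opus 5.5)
Your proposal has a genuine gap: it does not prove the reduction under the paper's hypotheses. In the paper, $\chi_{\omega_S}$ is only assumed to be compactly supported in the open interval $\omega_S$ and adapted to it, $\|\chi_{\omega_S}^{(n)}\|_\infty\lesssim|\omega_S|^{-n}$. Nothing gives a uniform $c<1$ with $\mathrm{supp}\,\chi_{\omega_S}\subset c\,\omega_S$, and the support may approach $\partial\omega_S$ at a rate depending on $S$. Your steps 1--3 treat only that special case, so the reduction rests on a hypothesis you added. (Incidentally, in that special case separation between different strips is automatic since $\frac1c(c\,\omega_S)=\omega_S$, so the ``delicate'' adjacent-strips issue in your last paragraph does not arise.)

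The Whitney route you discarded is exactly how the paper handles the general case, and your objection to it is mistaken. For a fixed relative scale $k$, each strip contributes only boundedly many Whitney intervals, so nothing multiplies $\sharp\mathcal S$ by a non-uniform factor. What is non-uniform is only the range of $k$, and that is paid for by decay.

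The missing ingredient is this decay. Since $\chi_{\omega_S}$ is smooth on $\mathbb R$ and vanishes outside $\omega_S$, all its derivatives vanish at the endpoints. Taylor's formula with the adaptedness bounds then gives $|\chi_{\omega_S}^{(j)}(\xi)|\lesssim|\omega_S|^{-j}\big(\mathrm{dist}(\xi,\omega_S^c)/|\omega_S|\big)^{N-j}$.

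Group the Whitney intervals $\omega$ of $\omega_S$ (so that a fixed dilate of each $\omega$ still lies in $\omega_S$) into $\Omega_S^k=\{\omega:\ |\omega|\sim 2^{-k}|\omega_S|\}$, and set $\chi_{\omega_S,k}:=\chi_{\omega_S}\sum_{\omega\in\Omega_S^k}\chi_\omega$. Then $\chi_{\omega_S,k}$ is $2^{-kN}$ times a bounded sum of bumps adapted to the intervals of $\Omega_S^k$.

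For fixed $k$, after a bounded colouring that separates neighbouring pieces of the same strip, these intervals form well-separated families of cardinality $\lesssim\sharp\mathcal S$. Different strips are separated because the dilated Whitney intervals stay inside the pairwise disjoint $\omega_S$. The well-separated case then yields $\|\Pi_{\mathcal S,k}\|_{L^{p_1}\times L^{p_2}\to L^p}\lesssim 2^{-kN}(\sharp\mathcal S)^{\epsilon}$. Finally, use the pointwise bound $\Pi_{\mathcal S}\le\sum_k\Pi_{\mathcal S,k}$ and sum over $k$, with the $p$-triangle inequality when $p<1$. This completes the reduction without any extra assumption on the supports.
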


\begin{proof} We start with an initial collection $(\omega_S)_{S\in{\mathcal S}}$ which is only disjoint. Then for every interval $\omega_S$ we consider its Whitney covering (partition) by dyadic intervals: $\omega_S:= \bigsqcup \omega$ with dyadic intervals $\omega \subset \omega_S$ such that
$$ 4 |I| \leq \textrm{dist}(\omega, \omega_S^c) \leq 10 |I|$$
and denote for $k\geq 0$
$$ \Omega_S^k:=\{\omega, 2^{-k-1} |\omega_S| < |\omega| \leq 2^{-k} |\omega_S|\}$$ so that we have 
$$ \omega_S = \bigsqcup_{k\geq 0} \big( \bigsqcup_{\omega\in\Omega_S^k} \omega\big).$$
We then consider a partition of the unity $(\chi_{\omega})_{\omega}$ associated to the covering $(2\omega)_{\omega}$, so that
$$ 1_{\omega_S} = \sum_{\omega} \chi_{\omega}$$
and each $\chi_{\omega}$ beeing adapted to $\omega$, which means that $\chi_{\omega}\in C^\infty_0(2\omega)$ and $\|\chi_{\omega}^{(n)}\|_\infty \lesssim |\omega|^{-n}$ for sufficiently large integers $n$.

So then we decompose
$$ \chi_{\omega_S} = \sum_{k\geq 0} \chi_{\omega_S} \cdot \big(\sum_{\omega \in \Omega_S^k} \chi_{\omega}\big) := \sum_{k\geq 0} \chi_{\omega_S,k}.$$ 
By the properties of construction, $\chi_{\omega_{S,k}}$ is supported and adapted to the union of two intervals which constitute a {\it well-separated} collection and are uniformly bounded by $2^{-k}$. So if we assume that the result is true for such collections, we deduce that for every $k\geq 0$, the corresponding square function $\Pi_{{\mathcal S},k}$ is bounded with an extra $2^{-k}$ and then we conclude by using that pointwisely
$$ \Pi_{\mathcal S}(f,g) \leq \sum_{k\geq 0} \Pi_{{\mathcal S},k}(f,g).$$ 
\end{proof}

So from now on and in all the remaining part of this work, we will assume that the collection of intervals $(\omega_S)_{S\in{\mathcal S}}$ is {\it well-separated}.

The next step of the reduction is to discretize the square function and to use multilinear interpolation to reduce to {\it restricted weak-type estimates} for the discrete models.

\subsection{Reduction to discrete model operators}

By duality, to estimate $\Pi_{\mathcal S}$ in $L^p$ ($1<p<\infty$), one can test it and use
$$ \|\Pi_{\mathcal S}(f,g)\|_{L^p} := \sup_{\hh\in L^{p'}(\ell^2)  \atop \|\hh\|_{L^{p'}(\ell^2)}=1} \int_{\mathbb R} \sum_{S\in{\mathcal S}} \pi_{S}(f,g)(x) h_{S}(x)\, dx$$
where we take the supremum over the sequence of functions $\hh:=(h_S)_{S\in {\mathcal S}}$ belonging to $L^{p'}(\ell^2({\mathcal S}))$ and beeing normalized.

In this part $\pi_{S}$ is the smooth bilinear projector, so by covering the strip $S$ with squares $\omega$ (of scale the one of the strip) and then by performing a windowed Fourier decomposition, it is rather standard that $\pi_S(f,g)$ can be decomposed along wave-packets:

\begin{definition}[Wave-packet] For a rectangle $I \times U$ of area $1$, a wave-packet $\phi_{I\times U}$ is a smooth function $L^2$-normalized which has a frequency support on $U$ and is adapted to $U$ in frequence and to $I$ in space, i.e. for sufficiently large integers $n\geq 0$ and $M\geq 0$, for every $x,\xi$ then
$$ |\phi_{I\times U}^{(n)}(x)|\lesssim |I|^{-n-\frac{1}{2}} \big(1+\frac{d(x,I)}{|I|}\big)^{-M} \qquad \textrm{and} \qquad |\hat \phi_{I\times U}^{(n)}(\xi)|\lesssim |\omega|^{-n-\frac{1}{2}} {\bf 1}_{\omega}(\xi).$$
For an interval $I$, we denote
$$ \chi(x):=\big(1+\frac{d(x,I)}{|I|}\big)^{-M}$$
where $M$ is an exponent as large as required (and which can vary from a line to another one).

\end{definition}

By standard arguments, we then have the following decomposition \footnote{Here we denote $\approx$ by the fact that the LHS can be decomposed into a (finite or infinite with fastly decreasing coefficients) sum of terms of the form the RHS.}:
$$ \pi_S(f,g)= \sum_{\omega \cap S \neq \emptyset \atop \omega=\omega_1 \times \omega_2} 
 \sum_{I \atop {|I|=|\omega_1|^{-1} \atop |I|=|\omega_2|^{-1}}} |I|^{-1/2} \cdot \langle f, \phi_{I\times \omega_1}\rangle \cdot \langle g,\phi_{I \times \omega_2}\rangle \cdot \phi_{s_3},$$ 
where the sum can be restricted to intervals $\omega$ and $I$ belonging to a dyadic grid (up to a finite sum of similar terms in order to take into account shifted dyadic grids as well).

So it is sufficient to work on such discrete models. Aiming that we fix ${\mathbb D}$ the collection of dyadic intervals and we recall the notion of {\it tri-tiles}:

\begin{definition} A (unitary) tile is the product of two dyadic intervals $I\times v$ of area one $|I| \cdot |v|=1$. A tri-tile $s=(s_1,s_2,s_3)$ is a collection of three tiles sharing the same spatial interval $I=I_s$
$$ s_1=I_s \times \omega_{s_1}, \qquad s_2=I_s \times \omega_{s_2} \qquad \textrm{and} \qquad s_2=I_s \times \omega_{s_3}$$
with the property that $\omega_{s_3} \subset -(\omega_{s_1}+\omega_{s_2})$. We denote by $\PP$ for a generic (finite) collection of {\it tri-tiles}.
For such a {\it tri-tile} $s$, we will denote the frequency square $\omega_{s}:=\omega_{s_1} \times \omega_{s_2}$ which will play an important role, interacting with the strips.
\end{definition}

With these notions, we are then reduced to the study of the following trilinear form (and to prove bounds, uniformly with respect to any finite collection $\PP$)
$$ \Lambda_{\PP}(f,g,\hh ) := \sum_{S \in {\mathcal S}} \sum_{s\in \PP \atop \omega_s \cap S \neq \emptyset} |I_s|^{-1/2} \cdot |\langle f, \phi_{s_1}\rangle| \cdot |\langle g,\phi_{s_2}\rangle| \cdot |\langle h_S, \phi_{s_3}\rangle|,$$  
where $\hh:=(h_S)_{S\in{\mathcal S}}$ is a sequence of functions (indexed by the collection of strips) and by $\omega_s \cap S \neq \emptyset$, we mean that $\omega_s:=\omega_{s_1} \times \omega_{s_2}$ has to meet $S$ and so in particular $(\omega_{s_i})_{i=1,2,3}$ have to be at the scale of $S$ (and more precisely of $\omega_S$). When $\omega_s \cap S\neq \emptyset$ then $\omega_s \subset 2S$ where $2S$ is the strip given by the double interval $\omega_{2S}:=2\omega_{S}$. Because we assume that the collection of strips ${\mathcal S}$ is {\it well-distributed} that means that such a frequency square will not meet any other strips. So for every $s\in\PP$ there is at most one strip such that $\omega_s\cap S\neq \emptyset$. That is an important property and will be implicitely used at many places.

\medskip

As usual, we will prove boundedness by use of multilinear interpolation and restricted weak-type estimates. For $E$ any measureable subset of finite measure, we denote by
$$ X(E):=\{f\in L^{\infty}({\mathbb R}),\ |f(x)|\leq {\bf 1}_{E}(x) \ a.e.\}$$
and its $\ell^2$ version (we keep the same notation for simplicity, the context will dictate the use)
$$ X(E):=\{\hh\in L^{\infty}(\ell^2({\mathbb R})),\ \|\hh(x)\|_{\ell^2}\leq {\bf 1}_{E}(x)\ a.e.\}.$$

In order to prove Theorem \ref{thm:main}, we know that by multilinear interpolation, it is sufficient to prove some ``restricted weak-type'' estimates.

\begin{definition} For a triple $\nu:=(\nu_1,\nu_2,\nu_3)$ satisfying $\nu_1+\nu_2+\nu_3=1$ with $0<\nu_1,\nu_2<1$ and $-1<\nu_3<1$, a trilinear form ${\mathcal L}$ acting on the Schwartz spaces ${\mathcal S}({\mathbb R}) \times {\mathcal S}({\mathbb R}) \times \ell^2({\mathcal S}({\mathbb R}))$ is said to be of restricted weak-type $\nu$ if for every measurable sets (of finite measure) $F,G,H$, there exists a major subset $H'\subset H$ (with $|H|\leq 2|H'|$) such that for every functions $f\in X(F)$, $g\in X(G)$ and $\hh \in X(H')$ one has
$$ \left|{\mathcal L}(f,g,\hh)\right| \lesssim |F|^{\nu_1} \cdot |G|^{\nu_2} \cdot |H|^{\nu_3}$$
and we denote by $\|{\mathcal L}\|_{[\nu_1,\nu_2,\nu_3]}$ the best implicit constant.
\end{definition}

The multilinear interpolation (with a slight precaution due to the vector-valued context, see \cite[Section 2.2]{TheseCristina} and \cite[Section 2.3]{BeneaMuscalu}) gives the following:

\begin{theorem} Let $p_1,p_2\in(1,\infty)$ and $p\in(\frac{1}{2},\infty)$ such that $\frac{1}{p_1}+\frac{1}{p_2}=\frac{1}{p}$.
Then with $\frac{1}{p'}=1-\frac{1}{p}\in(-1,1)$ if there exists a neighborhood $V$ around $(\frac{1}{p_1},\frac{1}{p_2},\frac{1}{p'})$ such that for every $\nu\in V$, the trilinear (or sublinear) form ${\mathcal L}$ is of restricted weak-type $\nu$, then ${\mathcal L}$ is bounded (admits a countinous  extension) from $L^{p_1}\times L^{p_2} \times L^{p'}(\ell^2)$ (if $p'\in[1,\infty)$) and
$$ \|{\mathcal L}\|_{L^{p_1} \times L^{p_2} \times L^{p'}(\ell^2)} \lesssim \sup_{\nu\in V} \|{\mathcal L}\|_{[\nu_1,\nu_2,\nu_3]}.$$
In the case that ${\mathcal L}(f_1,f_2,\hh)=\langle T(f,g),\hh\rangle$ for some $\ell^2$-valued bilinear operator $T$ then we have (even if $p:=\frac{p'}{p'-1}<1$) that $T$ admits a continuous extension from $L^{p_1} \times L^{p_2}$ into $L^{p}(\ell^2)$ with
$$ \|T\|_{L^{p_1} \times L^{p_2} \to L^{p}(\ell^2)} \lesssim \sup_{\nu\in V} \|{\mathcal L}\|_{[\nu_1,\nu_2,\nu_3]}.$$
\end{theorem}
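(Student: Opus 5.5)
The plan is to follow the Muscalu--Tao--Thiele scheme for multilinear interpolation of restricted weak-type estimates, checking at each step that the third slot may be $\ell^2$-valued. The key observation is that the definition of restricted weak type already allows arbitrary $\hh\in X(H')$: the $\ell^2$-structure enters only through the scalar function $\|\hh(x)\|_{\ell^2}$, so level-set decompositions can be applied to $x\mapsto\|\hh(x)\|_{\ell^2}$ exactly as to $|h|$ in the scalar case. This is the ``slight precaution'' referred to above.

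\textbf{Operator statement.} We first treat the case ${\mathcal L}(f,g,\hh)=\langle T(f,g),\hh\rangle$, since this is what Theorem \ref{thm:main} needs.

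Step 1 (weak-type bound on indicator-bounded inputs). Fix $f\in X(F)$, $g\in X(G)$ and $\lambda>0$, and set $H:=\{\|T(f,g)\|_{\ell^2}>\lambda\}$. Let $H'\subset H$ be the major subset given by restricted weak type for $\nu\in V$, and take the test function
$$\hh:={\bf 1}_{H'}\,\frac{T(f,g)}{\|T(f,g)\|_{\ell^2}}\in X(H').$$
Then $\lambda|H|/2\le\lambda|H'|\le |{\mathcal L}(f,g,\hh)|\lesssim |F|^{\nu_1}|G|^{\nu_2}|H|^{\nu_3}$. Since $1-\nu_3=\nu_1+\nu_2=1/p$, this gives
$$|H|^{1/p}\lesssim \lambda^{-1}|F|^{\nu_1}|G|^{\nu_2}.$$
That is, $T$ maps $X(F)\times X(G)$ into $L^{p_\nu,\infty}(\ell^2)$ with bound $\|{\mathcal L}\|_{[\nu]}|F|^{\nu_1}|G|^{\nu_2}$, for every $\nu\in V$.

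Step 2 (from indicator-bounded to general inputs). Decompose $f=\sum_k 2^k f_k$ with $f_k:=2^{-k}f\,{\bf 1}_{\{2^{k}<|f|\le 2^{k+1}\}}$, so that $f_k\in X(F_k)$ for the corresponding level set $F_k$; decompose $g=\sum_l 2^l g_l$ in the same way. By quasi-linearity of the $\ell^2$-norm, $\|T(f,g)\|_{\ell^2}\le\sum_{k,l}2^{k+l}\|T(f_k,g_l)\|_{\ell^2}$.

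For each pair $(k,l)$, choose the exponent $\nu=\nu(k,l)$ inside the neighbourhood $V$ according to the relative sizes of $|F_k|$ and $|G_l|$. This gives geometric decay in $k$ and $l$ away from the diagonal where $2^{kp_1}|F_k|\approx 2^{lp_2}|G_l|$. This is the standard Marcinkiewicz-type argument in its multilinear form (Janson / Grafakos--Kalton), and it requires that $V$ be an open neighbourhood so that $\nu$ can be perturbed in every direction.

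Step 3 (summation in quasi-Banach spaces). Sum the weak-type pieces in $L^{p,\infty}$. When $p<1$ this is only a quasi-norm, and I would use the standard $p$-convexity / Kalton argument to do the summation. Finally upgrade from weak type to strong $L^p(\ell^2)$ by interpolating between two nearby exponent triples in $V$ (real interpolation).

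\textbf{Trilinear statement (case $p'\ge1$).} Here the same decomposition is applied in all three slots. The third function is split as $\hh=\sum_m 2^m\hh_m$, with $\hh_m:=2^{-m}\hh\,{\bf 1}_{\{2^m<\|\hh\|_{\ell^2}\le 2^{m+1}\}}$, so that each $\hh_m\in X(H_m)$.

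The subtlety is the major subset. When $\nu_3<0$ we may only test against $\hh\in X(H')$, with $H'$ depending on $F$ and $G$. To handle this, I would remove an exceptional set of the form
$$\Omega=\{M{\bf 1}_F\gtrsim |F|/|H|\}\cup\{M{\bf 1}_G\gtrsim |G|/|H|\},$$
where $M$ is the Hardy--Littlewood maximal function, exactly as in Muscalu--Tao--Thiele. This works because that exceptional set is built from $F$, $G$ and $H$ only, and never looks at the individual components of $\hh$.

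\textbf{Expected obstacle.} The main difficulty is the bookkeeping in Step 2 when $\nu_3<0$, that is, when $p<1$. There the exponents have to be chosen pair by pair so that the resulting double (or triple) geometric series converge, and this has to be done compatibly with the major-subset constraint. I would first try to copy the scalar proof verbatim, replacing $|h|$ by $\|\hh\|_{\ell^2}$ throughout. I expect no new difficulty beyond this, since every step uses only the pointwise size of $\hh$.
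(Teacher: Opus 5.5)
The paper does not prove this statement; it only cites the Muscalu--Tao--Thiele multilinear interpolation in the vector-valued form of Benea's thesis and of Benea--Muscalu. Your overall route is that one, and your operator part is sound. Step 1 is exactly the standard duality trick: it handles the major subset correctly and works for either sign of $\nu_3$. From the resulting bounds $\|T(f,g)\|_{L^{p_\nu,\infty}(\ell^2)}\lesssim|F|^{\nu_1}|G|^{\nu_2}$, $\nu\in V$, you can quote the multilinear Marcinkiewicz theorem (Janson, Grafakos--Kalton) verbatim, since its proof only uses the quasi-triangle inequality for Lorentz quasi-norms of $\|\cdot\|_{\ell^2}$. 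Your own paraphrase of Step 2 is loose, though. Pieces estimated with different $\nu$ land in different spaces $L^{p_\nu,\infty}$, so the exponents must either stay on the segment $\nu_1+\nu_2=1/p$ or be chosen depending on the height $\lambda$.

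The genuine gap is in the trilinear part. There ${\mathcal L}$ is an abstract form, and the major subset $H'$ is handed to you by the hypothesis. You cannot replace it by $H\setminus\Omega$ with $\Omega=\{M{\bf 1}_F\gtrsim|F|/|H|\}\cup\{M{\bf 1}_G\gtrsim|G|/|H|\}$, because the hypothesis says nothing about tests supported there. That device is how one \emph{proves} a restricted weak-type estimate for a concrete model form (it is what the paper does later for $\Lambda_{\PP}$), not how one interpolates. Also, in this paper's definition the major subset is imposed for every $\nu$, not only when $\nu_3<0$. For $p'\in[1,\infty)$ one has $1/p'\in(0,1)$, so after shrinking $V$ the regime $\nu_3\le0$ never occurs.

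The missing idea is to exhaust $H$ by major subsets, which uses $\nu_3>0$. Put $H_1:=H$, let $H_j'\subset H_j$ be the major subset given by the hypothesis for $(F,G,H_j)$, and set $H_{j+1}:=H_j\setminus H_j'$. Then $|H_j|\le2^{1-j}|H|$, and the $H_j'$ exhaust $H$ up to a null set. For $\hh\in X(H)$, (sub)linearity in the third slot gives
$$|{\mathcal L}(f,g,\hh)|\le\sum_{j\ge1}|{\mathcal L}(f,g,\hh{\bf 1}_{H_j'})|\lesssim|F|^{\nu_1}|G|^{\nu_2}\sum_{j\ge1}\big(2^{1-j}|H|\big)^{\nu_3}\lesssim|F|^{\nu_1}|G|^{\nu_2}|H|^{\nu_3}.$$
This is restricted type with no major subset, in a neighbourhood where all $\nu_i>0$. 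Only then does your level-set decomposition of $\hh$ along $\|\hh(x)\|_{\ell^2}$, combined with trilinear Marcinkiewicz interpolation, give the strong bound. When ${\mathcal L}=\langle T(f,g),\hh\rangle$ you could instead deduce the trilinear bound from the operator bound by H\"older, since $p>1$ in this case. This gap does not affect the operator statement, which is the one used for Theorem \ref{thm:main}.
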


From all these previous reductions, we get that Theorem \ref{thm:main} will be a consequence of this one:

\begin{theorem} \label{thm:main:reduc} Fix $\epsilon>0$ arbitrarily small and consider ${\mathcal S}$ a collection of {\it well-distributed} strips. Then, there exists a $\epsilon^2$-neighborhood $W_{\epsilon}$ of the local$-L^2$ range such that for every $\nu=(\nu_1,\nu_2,\nu_3)\in W_{\epsilon}$ satisfying $1=\nu_1+\nu_2+\nu_3$ and for all finite collection of tri-tiles $\PP$ (and uniformly with respect to it) then the trilinear form $\Lambda_{\PP}$ is of restricted weak-type $\nu$ with the bound
$$ \|\Lambda_{\PP}\|_{[\nu_1,\nu_2,\nu_3]} \lesssim (\sharp \mathcal S)^{\epsilon}.$$
\end{theorem}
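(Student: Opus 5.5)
We prove Theorem \ref{thm:main:reduc} by a size/energy argument on the trilinear form $\Lambda_{\PP}$, in the spirit of \cite{BB}, adapted to strips.

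\textbf{Exceptional set and normalization.} Given $F,G,H$, we first remove the exceptional set
$$\Omega:=\{M{\bf 1}_F> C|F|/|H|\}\cup\{M{\bf 1}_G>C|G|/|H|\}$$
and set $H':=H\setminus\Omega$. Tri-tiles with $I_s\subset\Omega$ are then handled by the usual spatial decay of the wave packets, so we may assume $I_s\not\subset\Omega$ for all $s\in\PP$.

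\textbf{Sizes and energies for $f$ and $g$.} For $f$ and $g$ we use the standard BHT sizes
$$\size_j(\PP')=\sup_{T}\Big(\frac{1}{|I_T|}\sum_{s\in T}|\langle f_j,\phi_{s_j}\rangle|^2\Big)^{1/2},$$
taken over lacunary trees $T$ in the $j$-th direction. After the removal of $\Omega$ these satisfy $\size_1\lesssim(|F|/|H|)^{1-\delta}$, and similarly for $g$. The standard energy bounds $\energy_1\lesssim|F|^{1/2}$ and $\energy_2\lesssim|G|^{1/2}$ also hold.

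\textbf{Size and energy for $\hh$.} This is the new ingredient, and we organize the tiles into columns and rows. Each $s$ meets at most one strip $S(s)$, by well-separation, and the third coefficient is $\langle h_{S(s)},\phi_{s_3}\rangle$. Along a fixed strip at a fixed scale, the squares $\omega_s$ project to one interval in the $\xi+\eta$ direction. We define
$$\ssize_3(\PP')=\sup_T\Big(\frac{1}{|I_T|}\sum_{S}\sum_{s\in T,\,S(s)=S}|\langle h_S,\phi_{s_3}\rangle|^2\Big)^{1/2}$$
together with an $L^2(\ell^2)$ energy $\energy_3$. We aim to prove
$$\ssize_3\lesssim \sup_{s}\frac{1}{|I_s|}\int_{H'}\chi_{I_s}\lesssim 1 \qquad\text{and}\qquad \energy_3\lesssim (\sharp\mathcal S)^{\frac32\alpha}|H|^{1/2}.$$
The energy bound comes from an orthogonality (Bessel-type) estimate for the families $\{\phi_{s_3}\}$ attached to disjoint trees, summed in $\ell^2$ over $S$. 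The loss arises because trees of different strips may overlap in frequency, and we absorb it by interpolating with the trivial $\ell^{r'}$ bound, $r>2$, $\alpha=\frac12-\frac1r$.

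\textbf{Tree lemma, decomposition and summation.} The tree estimate is
$$\Lambda_T\lesssim |I_T|\,\size_1\,\size_2\,\ssize_3.$$
We then perform the standard iterative decomposition of $\PP$ into forests of controlled sizes $2^{-n_j}$. This gives
$$\Lambda_{\PP}\lesssim\sum_{n_1,n_2,n_3} 2^{-n_1-n_2-n_3}\min_j\big(2^{2n_j}\energy_j^2\big),$$
and we bound the minimum by the geometric mean with weights $\theta_j$ close to $\frac12\pm O(\epsilon^2)$. Summing the geometric series requires each $\theta_j$ to be strictly positive and strictly less than the critical size exponent. Combined with the size bounds, this yields
$$|F|^{\nu_1}|G|^{\nu_2}|H|^{\nu_3}(\sharp\mathcal S)^{\epsilon},\qquad \epsilon=\tfrac32\alpha,$$
for all $\nu$ in a neighborhood of width $\sim\alpha^2$ of the local-$L^2$ range.

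\textbf{Main obstacle.} We expect the hardest step to be the energy bound for $\hh$. Trees from different strips are not disjoint in the frequency variable $\xi+\eta$, so orthogonality across strips fails. The first approach we would try is to split the trees into rows (same strip) and columns (same spatial/frequency top), as in \cite{BB}. Within a row we use $L^2$ orthogonality; across columns we use an $\ell^{r}$ counting argument, which loses $(\sharp\mathcal S)^{\frac32\alpha}$ through Hölder between $\ell^2$ and $\ell^{r'}$.
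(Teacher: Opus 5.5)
Your outline has the right overall shape (exceptional set, rows and columns, sizes and energies, a loss from H\"older between $\ell^{r'}$ and $\ell^2$). However, two load-bearing steps fail in this geometry.

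\textbf{The energy bounds for $f$ and $g$.} The bounds you call standard for BHT-type sizes ($\size_1$ an $L^2$-sum over $1$-lacunary trees, with $\energy_1\lesssim|F|^{1/2}$) are not available here.

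The BHT energy lemma needs strong disjointness. That comes from the selection order, via the rank-one structure of BHT tri-tiles: $\omega_{s_1}\subsetneq\omega_{s'_1}$ forces the other frequency intervals to nest too. Here the $\omega_s$ are disjoint dyadic squares on arbitrary strips, so $\omega_{s_1}\subsetneq\omega_{s'_1}$ forces $\omega_{s_2}\cap\omega_{s'_2}=\emptyset$, the opposite. As a result, mutually disjoint rows (the only $1$-lacunary trees) from different strips can pile up into one overlapping column of $s_1$-tiles.

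For instance, take $f=e^{i\xi_0x}{\bf 1}_{[0,N]}$ and one strip of width $\sim2^{-k}$ for each $k\le\log_2N$, placed far apart. For each $k$, take the row of tiles of length $2^k$ inside $[0,N]$ with $\omega_{s_1}\ni\xi_0$. These rows have disjoint tops and $L^2$-size $\sim1$, yet $\sum|I_{\RR}|\sim N\log N$ while $\|f\|_2^2=N$. That is an energy loss of $(\sharp\mathcal S)^{1/2}$.

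This is why the paper, following \cite{BB}, uses the single-coefficient sizes $|\langle f,\phi_{s_1}\rangle|/|I_s|^{1/2}$. Their energies are defined through the tops of disjoint columns (rows for $g$), and the energy bound is then just Bessel for disjoint tiles.

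\textbf{The missing idea: the tree estimate must use $\ell^r\times\ell^{r'}$.} With the sizes that do admit energy bounds, your $\ell^2\times\ell^2$ tree estimate does not close.

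On a column, $f$ is controlled by its sup-size and $\hh$ by its size. The lacunary $g$-coefficients, however, are only controlled by $(|I_{\CC}|^{-1}\sum_{s\in\CC}|\langle g,\phi_{s_2}\rangle|^2)^{1/2}$. This is a local $L^2$ average of $g$ that does not decrease along the decomposition. The sum over levels $n$ then behaves like $\sum_n 2^{2n}\cdot2^{-n}\cdot2^{-n}$, which has no uniform bound.

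The paper instead applies H\"older in $\ell^r\times\ell^{r'}$ with $r>2$ (Proposition \ref{prop:tree}), bounding $(\sum_s|I_s|^{-\alpha r}|\langle g,\phi_{s_2}\rangle|^r)^{1/r}\le\size(g)^{2\alpha}(\sum_s|\langle g,\phi_{s_2}\rangle|^2)^{(1-2\alpha)/2}$. The series then becomes $\sum_n2^{-2\alpha n}$ (Proposition \ref{prop:estimation}); this is exactly where $\alpha>0$ is indispensable.

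This choice has consequences you did not derive:
\begin{enumerate}
\item It forces $\hh$ to be measured in the weighted $\ell^{r'}$ size and energy.
\item The loss $(\sharp\mathcal S)^{\alpha}$ in Propositions \ref{prop:size} and \ref{prop:energy2} is precisely H\"older over the strips, from $\ell^{r'}(\mathcal S)$ to $\ell^2(\mathcal S)$.
\item The final exponent $\frac32\alpha$ and the range $\frac12+2\alpha^2$ come from the localization step, where $\size(\hh)\lesssim2^{-n_3/r'}$ is played against the energy $2^{-n_3/2}$.
\end{enumerate}

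By contrast, your $\ell^2$ size and energy for $\hh$ are in fact lossless. For fixed $S$, all relevant tiles have the scale of $S$, so Bessel gives $\sum_{s:\,S(s)=S}|\langle h_S,\phi_{s_3}\rangle|^2\lesssim\|h_S\|_2^2$, and $\sum_S\|h_S\|_2^2=\|h\|_2^2$. So the overlap of different strips in $\xi+\eta$ is not the obstacle. The exponents $\frac32\alpha$ and $\alpha^2$ in your sketch are asserted rather than derived. Also, three weights $\theta_j$ near $\frac12$ cannot sum to $1$.
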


\section{The reshuffling of the  collection of tiles}

From now, we fix the collection of tri-tiles $\PP$ and we aim to study the trilinear form $\Lambda_{\PP}$. Following \cite{BB}, we re-shuffle the collection in terms of {\it rows} and {\it columns}, with the following definitions.

\begin{definition}[Row/Column] A sub-collection $\RR \subset \PP$ is a row of top $t\in \PP$ if for every $s\in \RR$, 
\[
I_s \subseteq I_t \qquad \text{and} \qquad \omega_{t_2} \subseteq \omega_{s_2}.
\]
We denote the top as $t_{\RR}:=I_{\RR} \times \omega_{\RR}$.

A sub-collection $\CC \subset \PP$ is a column of top $t\in\PP$ if for every $s\in \CC$, 
\[
I_s \subseteq I_t \qquad \text{and} \qquad \omega_{t_1} \subseteq \omega_{s_1}.
\]
We denote the top as $t_{\CC}:=I_{\CC} \times \omega_{\CC}$.
\end{definition}

\begin{remark} Due to the geometry in the frequency plane, we observe that the tiles $s_1:=I_{s} \times \omega_{s_1}$ are mutually disjoint when $s$ varies along a row.
Similarly, the tiles $s_2:=I_{s} \times \omega_{s_2}$ are mutually disjoint when $s$ varies along a column.
\end{remark}

As expected (see \cite[Definition 2.4]{BB}), we say that a sequence of columns $(\CC_1,\cdots , \CC_n)$ is mutually disjoint if they are are disjoint sets of tri-tiles and if $(I_{\CC_j} \times \omega_{\CC_{j,1}})_{j}$ are disjoint as well.
A sequence of rows $(\RR_1,\cdots, \RR_n)$ is mutually disjoint if they are are disjoint sets of tri-tiles and if $(I_{\RR_j} \times \omega_{\CC_{j,2}})_{j}$ are disjoint as well.

\begin{definition}[Tree/Forest]
A {\it tree} $\TT$ is a collection which is either a row or a column and we denote by $I_{\TT}$ the spatial interval of its top. 
A {\it forest} $\FF$ is an arbitrary finite collection of {\it trees}.
\end{definition}

Then, to perform the time-frequency analysis, we need to define suitable quantities (see \cite[Definition 3.1]{BB}).

\begin{definition} For a sub-collection $\PP' \subset \PP$, we define the {\it size} for $f$ by
$$ \size_{\PP'}(f) := \sup_{s \subset \PP'} \frac{|\langle f, \phi_{s_1} \rangle |}{|I_s|^{1\over 2}}.$$
For a sub-collection $\PP' \subset \PP$, we define the {\it size} for $g$ by
$$ \size_{\PP'}(g) := \sup_{s \subset \PP'} \frac{|\langle g, \phi_{s_2} \rangle |}{|I_s|^{1 \over 2}}.$$
\end{definition}

For the sequence of functions $\hh$, we aim to keep a definition in terms of wave-packet coefficients. We refer the reader to the proof of \cite[Proposition 2.5]{BB} which motivated the notion of {\it size} through a maximal function (\cite[Definition 3.2]{BB}). Here we will keep the quantity appearing at the beginning of the proof and involving the wave-packet coefficients: through all the rest of the work we will use an extra exponent $r>2$ or a parameter $\alpha>0$ jointly defined by $\alpha:=\frac{1}{2}-\frac{1}{r}$ which will be used  in the technical estimates and will then be related to the $\epsilon$ parameter in the main statement.

\begin{definition}
For a sub-collection $\PP' \subset \PP$, we define the {\it size} for $\hh$, as 
$$ \size_{\PP'}(\hh) := \sup_{\TT \subset \PP' \atop \textrm{ column or row}} \left(\frac{1}{|I_{\TT}|^{r'/2}} \sum_{S\in \mathcal{S}} \sum_{I \atop |I|=|\omega_S|^{-1}} \sum_{s\in \TT \atop \omega_s \cap S\neq \emptyset} \big(\frac{|I_s|}{|I_{\TT}|}\big)^{\alpha r'} |\langle h_S,\phi_{s_3}\rangle|^{r'}\right)^{1/r'}.$$ 
\end{definition}

\begin{remark}
Since $\frac{r'}{2}+\alpha r' = 1$, one has
$$ \size_{\PP'}(\hh) := \sup_{\TT \subset \PP' \atop \textrm{ column or row}} \left(\frac{1}{|I_{\TT}|} \sum_{S\in \mathcal{S}} \sum_{s\in \TT \atop \omega_s \cap S\neq \emptyset} |I_s|^{\alpha r'} |\langle h_S,\phi_{s_3}\rangle|^{r'}\right)^{1/r'}.$$ 
\end{remark}

We will also use the ``modified'' {\it sizes}, encoding only the spatial information: for a function $v$
$$ \ssize_{\PP'}(v):=\sup_{s\in \PP'} \Big(\frac{1}{|I_s|} \int |v(x)| \chi_{I_s}(x) dx \Big).$$

We now define the {\it energy} quantities. For $f,g$ we follow \cite[Definition 3.3]{BB}:

\begin{definition}
For a sub-collection $\PP' \subset \PP$, we define the {\it energy} for $f$, as 
$$\energy_{\PP'}(f):=\sup_{n\in{\mathbb Z} \atop \mathfrak{C}} 2^{n} \left(\sum_{\CC \in \mathfrak{C}} |I_{\CC}| \right)^{1/2}$$
where $\mathfrak{C}$ ranges over all collections of mutually disjoint columns $\CC\subset \PP'$ so that
$$ \frac{|\langle f, \phi_{s_1}\rangle |}{|I_s|^{1\over 2}} \leq 2^{n+1} \qquad \textrm{for all $s\in\CC$}$$
and whose tops $t_{\CC}$ satisfy
$$ \frac{|\langle f, \phi_{t_{\CC,1}}\rangle |}{|I_{\CC}|^{1\over 2}}\geq 2^{n} \qquad \textrm{for all $\CC\in\mathfrak{C}$}.$$

Similarly for the function $g$, with rows instead of columns.
\end{definition}

For the sequence of functions $\hh$, we will have to modify slightly the {\it energy} with respect to \cite{BB} and so we set:

\begin{definition}
For a sub-collection $\PP' \subset \PP$, we define the {\it energy} for $\hh$, as 
$$ \energy_{\PP'}(\hh):=\sup_{n\in{\mathbb Z} \atop \FF} 2^{n} \left(\sum_{\TT \in \FF} |I_{\TT}| \right)^{1/2}$$
where  $\FF$ ranges over all forests of mutually disjoint rows and mutually disjoint columns satisfying for every $\TT\in \FF$
$$ \frac{1}{|I_{\TT}|^{r'/2}} \sum_{S\in \mathcal{S}} \sum_{s\in \TT \atop \omega_s \cap S\neq \emptyset} \big(\frac{|I_s|}{|I_{\TT}|}\big)^{\alpha r'} |\langle h_S,\phi_{s_3}\rangle|^{r'} \geq 2^{nr'}.$$
\end{definition}

As we will see later (Proposition \ref{prop:energy2}), this definition allows to have a $L^2$-control of the {\it energy} which was not the case in \cite{BB}. However this is only possible here because we allow a loss in terms of $\sharp{\mathcal S}$ (as we will have in Proposition \ref{prop:energy2}). In \cite{BB}, the setting was simpler (a collection of squares instead of strips as here) but the result was stronger since there was no loss and for that it was necessary to consider the $L^{r'}$-{\it energy} introduced in \cite{BB}.

\medskip

To control the trilinear form $\Lambda_{\PP}$, we will re-suffle the whole collection $\PP$ into rows and columns (as in \cite{BB}) and so we have first to estimate the trilinear form on these elementary sub-collections. That was already done in \cite[Proposition 2.5]{BB} (so we do not repeat its proof which is relatively easy):

\begin{proposition}[Column/Row estimate] \label{prop:tree} Let $\PP'$ be a sub-collection of $\PP$ and $\CC$ be a column of $\PP'$. Then we have the following estimate:
\begin{align*}
\Lambda_{\CC}(f,g,\hh) & \lesssim \size_{\PP'}(f) \cdot (\size_{\PP'}(g))^{2\alpha} \cdot \left(\frac{1}{|I_{\CC}|} \sum_{s\in \CC} |\langle g, \phi_{s_2}\rangle |^2\right)^{\frac{1-2\alpha}{2}} \cdot \size_{\PP'}(\hh) \cdot |I_{\CC}|.
\end{align*}
If $\RR$ is a row of $\PP'$, then we have similarly
\begin{align*}
\Lambda_{\RR}(f,g,\hh) & \lesssim (\size_{\PP'}(f))^{\alpha} \cdot \left(\frac{1}{|I_{\RR}|} \sum_{s\in \RR} |\langle f, \phi_{s_1}\rangle |^2\right)^{\frac{1-2\alpha}{2}} \cdot \size_{\PP'}(g) \cdot \size_{\PP'}(\hh) \cdot |I_{\RR}|.
\end{align*}
\end{proposition}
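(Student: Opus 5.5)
For the column estimate, I would bound $\Lambda_{\CC}$ by Hölder's inequality applied directly to the sum over $s\in\CC$. The exponents are $r$ on the $(f,g)$-factor and $r'$ on the $\hh$-factor. The point is that the weight $|I_s|^{\alpha}$ built into $\size(\hh)$ is exactly what remains after normalising $|I_s|^{-1/2}$.

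\emph{Step 1: splitting the product.} Using $\tfrac12=\tfrac1r+\alpha$, write each term as
$$|I_s|^{-1/2}|\langle f,\phi_{s_1}\rangle||\langle g,\phi_{s_2}\rangle||\langle h_S,\phi_{s_3}\rangle| = \Big(\tfrac{|\langle f,\phi_{s_1}\rangle|}{|I_s|^{1/2}}\Big)\cdot |\langle g,\phi_{s_2}\rangle|\,|I_s|^{-1/r}\cdot |I_s|^{\alpha}|\langle h_S,\phi_{s_3}\rangle|.$$
The first factor is bounded by $\size_{\PP'}(f)$. Since each $s$ meets at most one strip (well-separatedness), the double sum over $S$ and $s$ is really a single sum over $s\in\CC$.

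\emph{Step 2: Hölder.} Applying Hölder with exponents $(r,r')$ gives
$$\Lambda_\CC\le \size(f)\Big(\sum_{s\in\CC}\frac{|\langle g,\phi_{s_2}\rangle|^r}{|I_s|}\Big)^{1/r}\Big(\sum_{S}\sum_{s\in\CC,\ \omega_s\cap S\neq\emptyset}|I_s|^{\alpha r'}|\langle h_S,\phi_{s_3}\rangle|^{r'}\Big)^{1/r'}.$$
By the remark following the definition of $\size(\hh)$, the last factor is at most $\size_{\PP'}(\hh)\,|I_\CC|^{1/r'}$.

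\emph{Step 3: the $g$-factor.} Write $|\langle g,\phi_{s_2}\rangle|^r/|I_s| = \big(|\langle g,\phi_{s_2}\rangle|/|I_s|^{1/2}\big)^{r-2}|\langle g,\phi_{s_2}\rangle|^2 \le \size(g)^{r-2}|\langle g,\phi_{s_2}\rangle|^2$. This gives
$$\Big(\sum\Big)^{1/r}\le \size(g)^{1-2/r}\Big(\sum_{s\in\CC}|\langle g,\phi_{s_2}\rangle|^2\Big)^{1/r}.$$
Now $1-\tfrac2r=2\alpha$ and $\tfrac1r=\tfrac{1-2\alpha}{2}$. Normalising by $|I_\CC|^{1/r}$ and using $\tfrac1r+\tfrac1{r'}=1$ produces the total factor $|I_\CC|$, which is exactly the claimed column bound.

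\emph{Rows.} For rows I would run the same argument with the roles of $f$ and $g$ exchanged. The $f$-factor then carries $\size(f)^{2\alpha}\big(|I_\RR|^{-1}\sum_{s\in\RR}|\langle f,\phi_{s_1}\rangle|^2\big)^{(1-2\alpha)/2}$. This gives the exponent $2\alpha$ on $\size(f)$, which is stronger than the stated $\alpha$ when the size is at most $1$, or in any case matches up to a harmless rewriting.

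\emph{Main obstacle.} The delicate point is hidden: Steps 1–3 only use sizes and are straightforward. In \cite{BB} the $\ell^2$ sum $\sum_{s\in\CC}|\langle g,\phi_{s_2}\rangle|^2$ is controlled by Bessel's inequality because the $s_2$ tiles are disjoint along a column. That fact is not needed for the statement here, which keeps the $\ell^2$ sum explicit. I therefore expect no real obstacle beyond keeping track of the exponents.
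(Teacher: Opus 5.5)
Your route is the one the paper intends. The paper defers to \cite[Proposition 2.5]{BB}, and its definition of $\size_{\PP'}(\hh)$ is exactly the $\ell^{r'}$ quantity produced by a H\"older step with exponents $(r,r')$ along the tree. That step is followed by the size bound for $f$ and an $\ell^\infty$--$\ell^2$ interpolation for $g$, as in your Steps 2--3.

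However, your Step 1 identity is false as written. The powers of $|I_s|$ on the right add up to $-\frac12-\frac1r+\alpha=-\frac2r$, not $-\frac12$, so the identity holds only for $r=4$, where $\alpha=\frac1r$. Since the scales $|I_s|$ vary along a column, the discrepancy factor $|I_s|^{\frac12-\frac2r}$ cannot be absorbed. The same slip reappears in Step 3: the identity $|\langle g,\phi_{s_2}\rangle|^r/|I_s|=\big(|\langle g,\phi_{s_2}\rangle|/|I_s|^{1/2}\big)^{r-2}|\langle g,\phi_{s_2}\rangle|^2$ also holds only for $r=4$.

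The fix is to give the $g$-factor the weight $|I_s|^{-\alpha}$ instead of $|I_s|^{-1/r}$. Then the powers sum to $-\frac12$, and H\"older produces $\sum_{s\in\CC}|\langle g,\phi_{s_2}\rangle|^r|I_s|^{-\alpha r}$. Since $\alpha r=\frac r2-1$, this sum equals $\sum_{s\in\CC}\big(|\langle g,\phi_{s_2}\rangle|/|I_s|^{1/2}\big)^{r-2}|\langle g,\phi_{s_2}\rangle|^2\le \size_{\PP'}(g)^{r-2}\sum_{s\in\CC}|\langle g,\phi_{s_2}\rangle|^2$. From there your bookkeeping is correct and unchanged: $\frac{r-2}{r}=2\alpha$, $\frac1r=\frac{1-2\alpha}{2}$, $|I_\CC|^{1/r}|I_\CC|^{1/r'}=|I_\CC|$, and the $\hh$-factor is read off from the remark after the definition of $\size_{\PP'}(\hh)$.

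For rows, do not hedge with ``matches up to a harmless rewriting.'' The exponent $\alpha$ on $\size_{\PP'}(f)$ in the statement is a typo for $2\alpha$. The left side is homogeneous of degree $1$ in $f$, while the stated right side has degree $\alpha+(1-2\alpha)=1-\alpha$. Replacing $f$ by $\lambda f$ and letting $\lambda\to\infty$ shows the stated version fails whenever $\Lambda_{\RR}\neq 0$. The paper itself uses $\size_{\QQ_n}(f)^{2\alpha}$ when it applies the row estimate in the proof of Proposition~\ref{prop:estimation}.

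Your symmetric argument, with the same $|I_s|^{-\alpha}$ correction now placed on the $f$-factor, proves the $2\alpha$ version, and that is the correct statement. As you note, it implies the $\alpha$ version only when $\size_{\PP'}(f)\lesssim 1$.
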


\section{Control of the {\it sizes} and {\it energies}}

As usual the {\it size} quantities are bounded by the $L^\infty$ norm and more precisely local $L^1$-averages (or $L^{r'}$-averages for $\hh$):

\begin{proposition}[{\it Size} estimates] \label{prop:size} We have for an arbitrary collection $\PP'\subset \PP$ the following estimates:
$$ \size_{\PP'}(f) \lesssim \sup_{s\in\PP'} \left( \frac{1}{|I_s|} \int |f(x)| \chi_{I_s}(x) dx \right) \lesssim \|f\|_\infty,\quad \size_{\PP'}(g) \lesssim \sup_{s\in\PP'} \left( \frac{1}{|I_s|} \int |g(x)| \chi_{I_s}(x) dx \right) \lesssim\|g\|_\infty$$
and with $h:=\big(\sum_{S\in{\mathcal S}} |h_{S}|^2\big)^{1/2}$
$$ \size_{\PP'}(\hh) \lesssim (\sharp{\mathcal S})^{\alpha} \cdot \sup_{\TT} \left(\frac{1}{|I_{\TT}|}\int |h(x)|^{r'} \chi_{I_{\TT}}(x) dx \right)^{1/r'} \lesssim 
(\sharp{\mathcal S})^{\alpha} \cdot \|h \|_\infty.$$
\end{proposition}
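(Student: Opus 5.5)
The estimates for $f$ and $g$ are the classical ones. For a tri-tile $s$, the wave packet $\phi_{s_1}$ is $L^2$-normalized and adapted to $I_s$, so $|\phi_{s_1}(x)|\lesssim |I_s|^{-1/2}\chi_{I_s}(x)$. Hence
$$ \frac{|\langle f,\phi_{s_1}\rangle|}{|I_s|^{1/2}} \lesssim \frac{1}{|I_s|}\int |f(x)|\chi_{I_s}(x)\,dx \lesssim \|f\|_\infty ,$$
where the last step uses $\int \chi_{I_s}\lesssim |I_s|$. Taking the supremum over $s\in\PP'$ gives the bound for $f$, and the same argument with $\phi_{s_2}$ gives the bound for $g$.

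For $\hh$, fix a tree $\TT\subset \PP'$, which is a row or a column. The plan is first to pass from $\ell^{r'}$ to $\ell^2$ in the sum over tiles and strips, then to use $L^2$-orthogonality of the wave packets $\phi_{s_3}$ along the tree, localized to $I_\TT$. We write $N:=\sharp\{(S,s): S\in{\mathcal S},\ s\in\TT,\ \omega_s\cap S\neq\emptyset\}$ restricted to a fixed scale, and use Hölder with exponents $2/r'$ and $1/(1-r'/2)$. Since $1-\frac{r'}{2}=\alpha r'$, this gives $\sum a^{r'}\leq (\sum a^2)^{r'/2} N^{\alpha r'}$, and taking the $1/r'$-th power produces the factor $N^{\alpha}$.

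We organize the sum by scale $k$, i.e. by $|I_s|=2^{-k}|I_\TT|$. At a fixed scale, the spatial intervals $I_s\subset I_\TT$ number at most $2^k$. For each such $I_s$ and each strip $S$, only $O(1)$ frequency squares $\omega_s$ of the right scale in a row or column can meet $S$. This is because along a column $\omega_{s_1}$ is nested around $\omega_{t_1}$, and $\omega_s$ of scale $|\omega_S|$ must meet the strip. A strip also only contributes at the single scale $|\omega_S|$. So the number of pairs at scale $k$ is $\lesssim 2^k\cdot \sharp{\mathcal S}_k$, where ${\mathcal S}_k$ is the set of strips at that scale.

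The weight $(|I_s|/|I_\TT|)^{\alpha r'}=2^{-k\alpha r'}$ is exactly designed to cancel the $2^{k\alpha r'}$ loss coming from the $2^k$ spatial intervals in Hölder. After this cancellation, scale $k$ contributes at most
$$ (\sharp{\mathcal S}_k)^{\alpha r'}\Big(\sum_{S\in{\mathcal S}_k}\sum_{s} |\langle h_S,\phi_{s_3}\rangle|^2\Big)^{r'/2}, $$
up to the prefactor $|I_\TT|^{-r'/2}$.

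For the $\ell^2$ sum we use Bessel's inequality. For fixed $S$, the tiles $s_3$ with $s\in\TT$ and $\omega_s\cap S\neq\emptyset$ are essentially disjoint in phase space, since they are at the same scale with distinct spatial intervals, or separated in frequency. Hence almost-orthogonality, after inserting the localization $\chi_{I_\TT}$ thanks to the decay of the wave packets outside $I_\TT$ (all $I_s\subset I_\TT$), gives
$$\sum_{s}|\langle h_S,\phi_{s_3}\rangle|^2\lesssim \int |h_S|^2\chi_{I_\TT}.$$
Summing over $S$ yields $\int h^2\chi_{I_\TT}$, and then $|I_\TT|^{-r'/2}(\int h^2\chi_{I_\TT})^{r'/2}\lesssim \|h\|_\infty^{r'}$.

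The main obstacle is the sum over scales $k$. Since each strip contributes at exactly one scale, $\sum_k \sharp{\mathcal S}_k=\sharp{\mathcal S}$. I would therefore redo the Hölder step globally over all $(S,s)$ pairs instead of scale by scale, pairing the weights $2^{-k\alpha r'}$ against the counts $2^k$ so that the total count becomes $\sum_k 2^{-k}\cdot 2^k\sharp{\mathcal S}_k=\sharp{\mathcal S}$. This produces the loss $(\sharp{\mathcal S})^{\alpha}$, with $\ell^2$ sum bounded by $\int h^2\chi_{I_\TT}$ as above.

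This argument controls the supremum by the $L^2$-average of $h$ on $I_\TT$, which is the $L^{r'}$ average with $r'$ replaced by $2$. The stated bound uses the smaller $L^{r'}$-average, and obtaining it would require a local $L^{r'}$ Bessel-type estimate, for instance by interpolating the $L^2$ Bessel bound with the trivial $L^1\to\ell^\infty$ bound. The final bound by $\|h\|_\infty$ is immediate in either case. I expect the careful bookkeeping of this Hölder/counting step, and whether the $L^{r'}$ form can be reached, to be the delicate points.
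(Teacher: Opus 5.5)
Your argument for $f$ and $g$ is the same as the paper's. For $\hh$, however, there is a genuine gap, which you partly identify yourself. Your route only proves
\[
\size_{\PP'}(\hh)\lesssim (\sharp\mathcal S)^{\alpha}\sup_{\TT}\Big(\frac{1}{|I_\TT|}\int h^2\chi_{I_\TT}\Big)^{1/2}.
\]
This gives the outer bound $(\sharp\mathcal S)^\alpha\|h\|_\infty$, but not the stated intermediate inequality with the $L^{r'}$-average. Since $r'<2$, the $L^{r'}$ version is strictly stronger, and the difference matters. In the localization step of the main theorem, it is what gives $\size_{\PP_d(I)}(\hh)\le 2^{-n_3/r'}$ for $\hh\in X(H')$. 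With the $L^2$-average you would only get $2^{-n_3/2}$, and the exponent $\frac12+2\alpha^2\theta_3$ for $n_3$ collapses to $\frac12$, so the range does not enlarge in the $\nu_3$ direction.

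The loss comes from your very first step. Applying H\"older $\ell^{r'}\to\ell^2$ over the spatial index $s$, and not only over the strips, turns an $L^{r'}$-type quantity into an $L^2$ one, and nothing afterwards recovers it. Your suggested repair does not work as stated either: interpolating Bessel's $L^2\to\ell^2$ bound with an $L^1\to\ell^\infty$ bound gives $L^{r'}\to\ell^{r}$ with $r>2$, which is the wrong exponent on the sequence side.

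The missing idea is that the weight $|I_s|^{\alpha r'}$ is there precisely so that no orthogonality in $s$ is needed. Since $\phi_{s_3}$ is adapted to $I_s\subset I_\TT$, one has $|\langle h_S,\phi_{s_3}\rangle|\lesssim |I_s|^{1/2}\inf_{I_s}M(h_S\chi_{I_\TT})$, with $M$ the Hardy--Littlewood maximal function. Using $\alpha r'+\frac{r'}{2}=1$, this gives
\[
|I_s|^{\alpha r'}|\langle h_S,\phi_{s_3}\rangle|^{r'}\lesssim \int_{I_s}\big[M(h_S\chi_{I_\TT})\big]^{r'}dx .
\]
For fixed $S$, the relevant $I_s$ are disjoint subintervals of $I_\TT$ (up to $O(1)$ multiplicity). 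Summing in $s$ and using the $L^{r'}$-boundedness of $M$ ($r'>1$) therefore gives $\lesssim\int|h_S|^{r'}\chi_{I_\TT}$.

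Only then apply H\"older across the strips: $\sum_S|h_S|^{r'}\le(\sharp\mathcal S)^{1-r'/2}h^{r'}=(\sharp\mathcal S)^{\alpha r'}h^{r'}$. This yields the stated bound with the same loss $(\sharp\mathcal S)^\alpha$. Equivalently, at a fixed scale you could interpolate your $L^2\to\ell^2$ bound with the $L^1\to\ell^1$ bound of norm $|I_s|^{-1/2}$, not with $L^1\to\ell^\infty$. This gives $L^{r'}\to\ell^{r'}$ with norm $|I_s|^{-\alpha}$, exactly compensated by the weight.
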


\begin{proof} For $f$ and $g$, there is nothing to prove since the wave-packets are adapted to their spatial interval.

For $\hh$, we follow  what was done for the proof of \cite[Proposition 2.5]{BB}.
So we fix a {\it tree} $\TT$ and we have (with $M$ beeing the Hardy-Littlewood maximal function)
\begin{align*}
\frac{1}{|I_{\TT}|} \sum_{S\in{\mathcal S}} \sum_{s\in \TT \atop \omega_s\cap S \neq \emptyset} |I_s|^{\alpha r'} |\langle h_S,\phi_{s_3}\rangle|^{r'} & \leq  \frac{1}{|I_{\TT}|} \sum_{S\in{\mathcal S}} \sum_{s\in \TT \atop \omega_s\cap S \neq \emptyset} |I_s|^{\alpha r' + \frac{r'}{2}} \Big[\inf_{I_s}  M(h_S \chi_{I_{\TT}})\Big]^{r'} \\
& \leq  \frac{1}{|I_{\TT}|} \sum_{S\in{\mathcal S}} \sum_{s\in \TT \atop \omega_s\cap S \neq \emptyset} |I_s|^{\alpha r' + \frac{r'}{2}-1}  \cdot \int_{I_s} \big[M(h_S \chi_{I_{\TT}})(x)\big]^{r'} dx.
\end{align*}
Since $r'(\alpha+\frac{1}{2})=1$, one has 
\begin{align*}
\frac{1}{|I_{\TT}|} \sum_{S\in{\mathcal S}} \sum_{s\in \TT \atop \omega_s\cap S \neq \emptyset} |I_s|^{\alpha r'} |\langle h_S,\phi_{s_3}\rangle|^{r'} &  \leq  \frac{1}{|I_{\TT}|} \sum_{S\in{\mathcal S}} \sum_{s\in \TT \atop \omega_s\cap S \neq \emptyset} \int_{I_s} \big[M(h_S \chi_{I_{\TT}})(x)\big]^{r'} dx.
\end{align*}
We note that for $S\in{\mathcal S}$ fixed, then along any tree $\TT$ there is only one frequency square $(\omega_s)_{s\in \TT}$ which meets $S$ and so the corresponding spatial intervals $I_s$ are disjoint (and included in $I_{\TT}$). So
\begin{align*}
\frac{1}{|I_{\TT}|} \sum_{S\in{\mathcal S}} \sum_{s\in \TT \atop \omega_s\cap S \neq \emptyset} |I_s|^{\alpha r'} |\langle h_S,\phi_{s_3}\rangle|^{r'} &  \leq  \frac{1}{|I_{\TT}|} \sum_{S\in{\mathcal S}} \int_{I_{\TT}} \big[M(h_S \chi_{I_{\TT}})(x)\big]^{r'} dx \\
& \lesssim  \frac{1}{|I_{\TT}|} \sum_{S\in{\mathcal S}} \int |h_S(x)|^{r'} \cdot \chi_{I_{\TT}}(x) dx,
\end{align*}
where we used the $L^{r'}$-boundedness of the maximal function. Finally by doing a H\"older inequality along ${\mathcal S}$, one deduces 
\begin{align*}
\frac{1}{|I_{\TT}|} \sum_{S\in{\mathcal S}} \sum_{s\in \TT \atop \omega_s\cap S \neq \emptyset} |I_s|^{\alpha r'} |\langle h_S,\phi_{s_3}\rangle|^{r'} & \lesssim (\sharp {\mathcal S})^{\alpha r'} \cdot \frac{1}{|I_{\TT}|}  \int |h(x)|^{r'} \cdot \chi_{I_{\TT}}(x) dx.
\end{align*}
Taking the supremum over all {\it trees} in $\PP'$, allows us to conclude the estimate.
\end{proof}

For the functions $f,g$, we have the standard energy estimate -- see \cite[Propositions 3.6 and 5.1]{BB}:

\begin{proposition}[{\it Energy} estimates] \label{prop:energy1} For every sub-collection $\PP'\subset \PP$ of tri-tiles, one has
$$ \energy_{\PP'}(f)\lesssim \|f\|_2 \qquad \textrm{and} \qquad \energy_{\PP'}(g) \lesssim \|g\|_2.$$
If the collection $\PP'$ is localized on a spatial interval $I_{\PP'}$ then we have the local estimates
$$ \energy_{\PP'}(f)\lesssim \|f \cdot \chi_{I_{\PP'}}\|_2 \qquad \textrm{and} \qquad \energy_{\PP'}(g) \lesssim \|g \cdot \chi_{I_{\PP'}}\|_2.$$
\end{proposition}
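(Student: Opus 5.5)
We follow the standard orthogonality argument for energies (as in \cite[Propositions 3.6 and 5.1]{BB}). We treat $f$ and columns; the case of $g$ and rows is symmetric after exchanging the roles of $\omega_{s_1}$ and $\omega_{s_2}$.

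\emph{Global estimate.} Fix $n$ and a collection $\mathfrak{C}$ of mutually disjoint columns as in the definition of $\energy_{\PP'}(f)$. For each $\CC\in\mathfrak{C}$, the top satisfies
$$|\langle f,\phi_{t_{\CC,1}}\rangle|\geq 2^n |I_{\CC}|^{1/2},$$
so that
$$2^{2n}\sum_{\CC}|I_{\CC}| \leq \sum_{\CC}|\langle f,\phi_{t_{\CC,1}}\rangle|^2 .$$
By mutual disjointness, the tiles $(I_{\CC}\times\omega_{\CC,1})_{\CC}$ are pairwise disjoint dyadic tiles. The key step is then the almost-orthogonality (Bessel-type) inequality for wave packets adapted to disjoint dyadic tiles:
$$\sum_{\CC}|\langle f,\phi_{t_{\CC,1}}\rangle|^2\lesssim \|f\|_2\Big(\sum_{\CC}|\langle f,\phi_{t_{\CC,1}}\rangle|^2\Big)^{1/2}.$$
We obtain it by writing the sum as $\langle f,\sum_{\CC} a_{\CC}\phi_{t_{\CC,1}}\rangle$ with $a_{\CC}=\overline{\langle f,\phi_{t_{\CC,1}}\rangle}$. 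It then suffices to bound $\|\sum a_{\CC}\phi_{t_{\CC,1}}\|_2^2 \lesssim \sum|a_{\CC}|^2$. This follows from Schur's test on the Gram matrix $\langle \phi_{t},\phi_{t'}\rangle$:
\begin{itemize}
\item two tiles with disjoint frequency intervals give orthogonal wave packets, since the wave packets have exact compact Fourier support;
\item two disjoint tiles with nested frequency intervals must have spatially disjoint intervals, and then
$$|\langle\phi_t,\phi_{t'}\rangle|\lesssim \Big(\tfrac{|I_{t}|}{|I_{t'}|}\Big)^{1/2}\Big(1+\tfrac{d(I_t,I_{t'})}{|I_{t'}|}\Big)^{-M}\quad\text{for } |I_t|\leq|I_{t'}|.$$
\end{itemize}
Summing over dyadic scales and spatial positions gives a bounded Schur sum. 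The one technical point is that mutually disjoint columns could a priori share tops with the same tile; in that case we merge them, since disjointness of the tops is part of the hypothesis. We conclude that $2^{2n}\sum|I_{\CC}|\lesssim\|f\|_2^2$, and taking the supremum gives $\energy_{\PP'}(f)\lesssim\|f\|_2$.

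\emph{Local estimate.} Suppose $\PP'$ is localized on $I_{\PP'}$, so that every $I_s\subset I_{\PP'}$. Split $f=f\chi_{I_{\PP'}}^{}+f(1-\chi_{I_{\PP'}})$, or more precisely decompose $f$ into the pieces $f{\bf 1}_{2^{k+1}I_{\PP'}\setminus 2^kI_{\PP'}}$. The first piece is handled by the global estimate. For the pieces at distance $\sim 2^k|I_{\PP'}|$, the decay of $\phi_{t_{\CC,1}}$ away from $I_{\CC}\subset I_{\PP'}$ improves the Gram/Schur bound of the global step. Concretely, in $\|\sum a_{\CC}\phi_{t_{\CC,1}}{\bf 1}_{2^{k+1}I_{\PP'}\setminus 2^kI_{\PP'}}\|_2$ we gain a factor $2^{-kM'}$, which produces the weight $\chi_{I_{\PP'}}$. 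Summing over $k$ yields $\energy_{\PP'}(f)\lesssim\|f\chi_{I_{\PP'}}\|_2$, possibly with a larger decay exponent absorbed into the notation for $\chi$.

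The main obstacle is the Bessel inequality for disjoint tiles, and in particular verifying that mutually disjoint columns give pairwise disjoint top tiles so that Schur's test applies. All remaining steps are routine bookkeeping.
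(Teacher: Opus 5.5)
There is a genuine gap at your key step. You reduce to the bound $\|\sum_{\CC}a_{\CC}\phi_{t_{\CC,1}}\|_2^2\lesssim\sum_{\CC}|a_{\CC}|^2$, for \emph{arbitrary} coefficients and an arbitrary family of pairwise disjoint tiles. For Fourier wave packets this is false (in the Walsh model it would be exact orthogonality), and the unweighted Schur test you invoke does not close.

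You are right that, for a fixed tile $t$, the tiles $t'$ with $\omega_{t'}\supsetneq\omega_t$ have pairwise disjoint spatial intervals, all disjoint from $I_t$. But the row sums still diverge in both directions:
\begin{itemize}
\item If $|I_t|=1$ and $2^k$ such tiles of length $2^{-k}$ sit next to $I_t$, then $\sum_{t'}|\langle\phi_t,\phi_{t'}\rangle|$ can be of order $2^k\cdot 2^{-k/2}$.
\item A tile $t'$ interacts, at each scale $2^{\ell}|I_{t'}|$, with up to $2^{\ell}$ tiles whose frequency intervals lie in $\omega_{t'}$, each entry of size $2^{-\ell/2}$.
\end{itemize}

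This is not merely a defect of Schur's test; Bessel itself fails. Take $f=\phi_{P'}$ with $P'=[1,1+2^{-K})\times[0,2^K)$. Take the tiles $J_j\times\omega$ with $J_j=[1-2^{1-j},1-2^{-j})$ and $\omega\subset[0,2^K)$ dyadic of length $2^j$, for $1\leq j\leq K/2$. All these tiles are pairwise disjoint, and $J_j$ lies at distance $|J_j|$ from $I_{P'}$. For a generic profile, $|\langle f,\phi_{J_j\times\omega}\rangle|\sim 2^{-(K-j)/2}$ for most of the $2^{K-j}$ intervals $\omega$. Hence $\sum_{j,\omega}|\langle f,\phi_{J_j\times\omega}\rangle|^2\sim K\|f\|_2^2$. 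Nothing in the present setting prevents these tiles from being the tops of mutually disjoint (e.g.\ one-element) columns. Disjointness of the tops is part of the definition, so the difficulty does not lie where you place it.

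What is missing is that the energy only needs a weak-type Bessel bound, in which the coefficients are pinned. Split the columns of an admissible $\mathfrak{C}$ according to $2^m\leq|\langle f,\phi_{t_{\CC,1}}\rangle|\,|I_\CC|^{-1/2}<2^{m+1}$, $m\geq n$. Fix a level $m$ and set $a_\CC=\overline{\langle f,\phi_{t_{\CC,1}}\rangle}$ and $G=\sum_\CC a_\CC\phi_{t_{\CC,1}}$.
\begin{itemize}
\item Bound $|a_\CC||a_{\CC'}|\leq 4\cdot 2^{2m}|I_\CC|^{1/2}|I_{\CC'}|^{1/2}$.
\item Sum the Gram entries only in the favourable direction: for fixed $\CC$, over those $\CC'$ with $\omega_{\CC',1}\supseteq\omega_{\CC,1}$. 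Their spatial intervals are pairwise disjoint, so $\sum_{\CC'}|I_{\CC'}|^{1/2}|\langle\phi_{t_{\CC,1}},\phi_{t_{\CC',1}}\rangle|\lesssim|I_\CC|^{-1/2}\sum_{\CC'}|I_{\CC'}|(1+d(I_\CC,I_{\CC'})/|I_\CC|)^{-M}\lesssim|I_\CC|^{1/2}$.
\item This gives $\|G\|_2^2\lesssim 2^{2m}\sum_\CC|I_\CC|$, hence $\sum_\CC|I_\CC|\lesssim 2^{-2m}\|f\|_2^2$ at level $m$.
\end{itemize}
Summing over $m\geq n$ yields $2^{2n}\sum_{\CC\in\mathfrak{C}}|I_\CC|\lesssim\|f\|_2^2$. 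In the example above, different $j$ correspond to different levels $m$, which is why there is no contradiction.

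Your localization step has a similar weak point. Multiplying by ${\bf 1}_{2^{k+1}I_{\PP'}\setminus 2^kI_{\PP'}}$ destroys the exact frequency orthogonality that your Gram bound relies on, so the claimed gain $2^{-kM'}$ is not justified as written. A clean fix is to write $\langle f,G\rangle=\langle fw^{-1},wG\rangle$ with the polynomial weight $w(x)=(1+|x-c_{I_{\PP'}}|^2/|I_{\PP'}|^2)^{M}$. Then $w^2\phi_t$ still has Fourier support in $\omega_t$ and is adapted to $I_t\subset I_{\PP'}$, so the same pinned Gram argument bounds $\|wG\|_2$, while $w^{-1}\lesssim\chi_{I_{\PP'}}$.
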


We aim to have a similar result for the sequence of functions $\hh$.

\begin{proposition}[{\it Energy} estimates] \label{prop:energy2} Let $\PP'$ be any sub-collection of tri-tiles of $\PP$ and $\hh$ a sequence of functions. Then with $h:=(\sum_{S\in {\mathcal S}} |h_S|^2)^{1/2}$, we have
$$ \energy_{\PP'}(\hh) \lesssim  (\sharp {\mathcal S})^{\alpha} \cdot \|h\|_2.$$
If the collection $\PP'$ is localized on a spatial interval $I_{\PP'}$ then we have the local estimate
$$ \energy_{\PP'}(\hh) \lesssim  (\sharp {\mathcal S})^{\alpha} \cdot \|h \cdot \chi_{I_{\PP'}}\|_2.$$
\end{proposition}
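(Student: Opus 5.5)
Fix $n\in\mathbb Z$ and a forest $\FF$ of mutually disjoint rows and mutually disjoint columns in $\PP'$ satisfying the selection condition. The goal is
$$2^{2n}\sum_{\TT\in\FF}|I_{\TT}|\lesssim (\sharp\mathcal S)^{2\alpha}\|h\|_2^2.$$
The first step is to make the defining quantity of a tree look like an $\ell^2$ sum. For a fixed $S$ and a fixed tree $\TT$, the tiles $s\in\TT$ with $\omega_s\cap S\neq\emptyset$ have pairwise disjoint spatial intervals inside $I_{\TT}$, as observed in the proof of Proposition \ref{prop:size}. Hence Hölder's inequality with exponents $2/r'$ and its dual, applied on the sum over $(S,s)$ with weights $|I_s|^{\alpha r'}$, gives
$$\sum_{S}\sum_{s\in\TT,\ \omega_s\cap S\neq\emptyset}|I_s|^{\alpha r'}|\langle h_S,\phi_{s_3}\rangle|^{r'}\le \Big(\sum_{S,s}|\langle h_S,\phi_{s_3}\rangle|^2\Big)^{r'/2}\Big(\sum_{S,s}|I_s|\Big)^{1-r'/2}.$$
Here we use $\alpha r'/(1-r'/2)=1$, since $1-r'/2=\alpha r'$. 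The last factor is at most $(\sharp\mathcal S\,|I_{\TT}|)^{\alpha r'}$. Inserting this into the selection condition $\ge 2^{nr'}|I_{\TT}|$ yields
$$2^{2n}|I_{\TT}|\le (\sharp\mathcal S)^{2\alpha}\sum_{S}\sum_{s\in\TT,\ \omega_s\cap S\neq\emptyset}|\langle h_S,\phi_{s_3}\rangle|^2.$$
This is exactly where the loss $(\sharp\mathcal S)^{\alpha}$ enters.

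It therefore suffices to prove the Bessel-type inequality
$$\sum_{\TT\in\FF}\sum_{S}\sum_{s\in\TT,\ \omega_s\cap S\neq\emptyset}|\langle h_S,\phi_{s_3}\rangle|^2\lesssim \|h\|_2^2=\sum_S\|h_S\|_2^2.$$
The sum over $S$ decouples, so we fix $S$ and show
$$\sum_{\TT\in\FF}\sum_{s\in\TT,\ \omega_s\cap S\neq\emptyset}|\langle h_S,\phi_{s_3}\rangle|^2\lesssim\|h_S\|_2^2.$$
The relevant tiles $s_3$ all have frequency intervals at the single scale $|\omega_S|$, so the family $(\phi_{s_3})$ is almost orthogonal: $|\langle\phi_{s_3},\phi_{s'_3}\rangle|$ vanishes unless $\omega_{s_3}\cap\omega_{s'_3}\neq\emptyset$, and otherwise decays like $(1+d(I_s,I_{s'})/|I_s|)^{-M}$. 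The standard $TT^*$/Schur argument then gives Bessel's inequality, provided each tri-tile $s$ is counted boundedly many times. Disjointness of the forest is used here. Each of the rows (and, separately, the columns) are disjoint as sets of tri-tiles, so every $s$ appears at most twice, once in a row and once in a column. Without that disjointness, the mutual disjointness of tops is what prevents overcounting, exactly as in the energy lemma for $f,g$ (Proposition \ref{prop:energy1}, \cite[Prop.~3.6]{BB}).

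For the local estimate we replace $h_S$ by $h_S\chi_{I_{\PP'}}$. All $I_s\subset I_{\PP'}$, so $|\langle h_S(1-\chi_{I_{\PP'}}^{1/2}),\phi_{s_3}\rangle|$ carries a decay factor from the spatial localisation of $\phi_{s_3}$, and the tail terms are summed in the same almost-orthogonal way.

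The main obstacle is making sure the almost-orthogonality is truly uniform. Tiles coming from different trees can coincide or overlap, and the frequency intervals $\omega_{s_3}$ at a fixed scale can overlap too, because the $\omega_{s_3}$ are only contained in $-(\omega_{s_1}+\omega_{s_2})$. I would first try to argue that within one strip and one scale, the dyadic tiles $s_3$ form a finitely overlapping family, so the Schur test applies directly. If that fails, I would split into $O(1)$ subfamilies of pairwise disjoint tiles.
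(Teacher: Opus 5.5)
Your proof is correct and is essentially the paper's argument. Hölder's inequality, using $1-\frac{r'}{2}=\alpha r'$ and the disjointness of the $I_s$, $s\in\TT$, for a fixed strip, converts the $\ell^{r'}$-weighted tree quantity into an $\ell^2$ sum of coefficients at the price $(\sharp\mathcal S)^{\alpha}$; single-scale orthogonality for each fixed $S$ then gives $\|h\|_2$. The only difference is that you merge the paper's three Hölder steps (in $s$, in $S$, and over the trees of $\FF$) into one per-tree Hölder over pairs $(S,s)$. This yields $2^{2n}|I_{\TT}|\le(\sharp\mathcal S)^{2\alpha}\sum_S\sum_{s}|\langle h_S,\phi_{s_3}\rangle|^2$ directly, which is slightly cleaner.

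Your multiplicity count (each tri-tile lies in at most one row and one column of $\FF$) makes explicit the ``easy orthogonality'' the paper invokes. Your local estimate is sketched at the same level as the paper's. Its real content is a weighted Bessel inequality, and there the tail terms must also keep the frequency separation of the $\phi_{s_3}$ rather than being bounded with absolute values inside the integral.
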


\begin{proof}
Let us chose a maximiser in the definition of the {\it energy}: an integer $n$ and a mutually disjoint forest $\FF$ such that 
$$\energy_{\PP'}(\hh) \leq 2^{n+1}\left(\sum_{\TT \in \FF} |I_{\TT}| \right)^{1/2}$$
and for every tree $\TT\in\FF$
$$ 2^{n r'} |I_{\TT}| \leq \sum_{S\in{\mathcal S}} \sum_{s\in\TT \atop \omega_s \cap S \neq \emptyset} |I_s|^{\alpha r'} |\langle h_S,\phi_{s_3}\rangle|^{r'}.$$
With $c_s:=|I_s|^{\alpha} |\langle h_S, \phi_{s_3}\rangle |$ (where implicitly $S$ is the unique strip in ${\mathcal S}$ intersecting $\omega_{s}$), we have
\begin{align*}
\sum_{\TT\in \FF} \sum_{s\in \TT} |c_s|^{r'} & =
\sum_{\TT\in \FF} \sum_{S\in {\mathcal S}} \sum_{s\in \TT \atop \omega_s \cap S \neq\emptyset} |I_s|^{\alpha r'} |\langle h_S,\phi_{s_3}\rangle|^{r'} \\
& \leq
\sum_{\TT\in \FF} \sum_{S\in {\mathcal S}} \big(\sum_{s\in \TT \atop \omega_s \cap S \neq\emptyset} |I_s|\big)^{\alpha r'} \big(\sum_{s\in \TT \atop \omega_s \cap S \neq\emptyset} |\langle h_S,\phi_{s_3}\rangle|^{2}\big)^{r'/2} \\
& \leq
\sum_{\TT\in \FF} \sum_{S\in {\mathcal S}} |I_{\TT}|^{\alpha r'} \big(\sum_{s\in \TT \atop \omega_s \cap S \neq\emptyset} |\langle h_S,\phi_{s_3}\rangle|^{2}\big)^{r'/2} \\
\end{align*}
where we used H\"older inequality with $\frac{1}{r'}=\alpha+\frac{1}{2}$ and then that for $S\in{\mathcal S}$ beeing fixed, then the frequency tile $\omega_s$ is fixed when $s$ varies along a tree $\TT$ and so the spatial intervals $I_s$ are disjoint (and included in $I_{\TT}$).
So we obtain (by H\"older inequality)
\begin{align*}
\sum_{\TT\in \FF} \sum_{s\in \TT} |c_s|^{r'} & \leq (\sharp {\mathcal S})^{1-\frac{r'}{2}}
\sum_{\TT\in \FF} |I_{\TT}|^{\alpha r'} \big(\sum_{S\in {\mathcal S}} \sum_{s\in \TT \atop \omega_s \cap S \neq\emptyset} |\langle h_S,\phi_{s_3}\rangle|^{2}\big)^{r'/2} \\
& \leq (\sharp {\mathcal S})^{1-\frac{r'}{2}}
\big(\sum_{\TT\in \FF} |I_{\TT}|\big)^{\alpha r'} \big(\sum_{\TT\in \FF} \sum_{S\in {\mathcal S}} \sum_{s\in \TT \atop \omega_s \cap S \neq\emptyset} |\langle h_S,\phi_{s_3}\rangle|^{2}\big)^{r'/2}.
\end{align*}
By easy orthogonality arguments (since for $S\in{\mathcal S}$ fixed, the tri-tiles $s$ such that $\omega_s\cap S\neq \emptyset$ have all the same scale, given by the strip $S$)
\begin{align*}
\big(\sum_{\TT\in \FF} \sum_{S\in {\mathcal S}} \sum_{s\in \TT \atop \omega_s \cap S \neq\emptyset} |\langle h_S,\phi_{s_3}\rangle|^{2}\big)^{1/2} & \lesssim \big(\sum_{S\in {\mathcal S}} \|h_S \|_2^2\big)^{1/2} \\
& \lesssim \|h\|_{2},
\end{align*}
with by convention $h:=(\sum_{S\in {\mathcal S}} |h_S|^2)^{1/2}$. So we conclude to
\begin{align*}
2^{nr'} \big(\sum_{\TT\in \FF} |I_{\TT}| \big) & \leq
\sum_{\TT\in \FF} \sum_{s\in \TT} |c_s|^{r'}  \lesssim (\sharp {\mathcal S})^{1-\frac{r'}{2}}
\big(\sum_{\TT\in \FF} |I_{\TT}|\big)^{\alpha r'} \|h\|_2^{r'} 
\end{align*}
which yields (since $1-\alpha r'=\frac{r'}{2}$)
$$ 2^{n} \big(\sum_{\TT\in \FF} |I_{\TT}| \big)^{\frac{1}{2}} \lesssim (\sharp {\mathcal S})^{\alpha} \cdot \|h\|_2$$
and so allows us to conclude to the (global) energy estimate.

For the localized statement, we just observe that if $\PP'$ is localized on $I_{\PP'}$ then all the previous arguments imply wave-packets also localized in $I_{\PP'}$ and so one can keep and track the localization at every step.
\end{proof}

\section{Decomposition lemmas and summation over columns/rows}

Through all this section and the next one, we fix the functions $f,g,\hh$ and a sub-collection of tri-tiles $\PP'\subset \PP$ and we will use the following notations
\begin{gather*}
S_1:=\size_{\PP'}(f),\quad S_2:=\size_{\PP'}(g),\quad S_3:=\size_{\PP'}(\hh),\\
 E_1:=\energy_{\PP'}(f), \quad E_2:=\energy_{\PP'}(g), \quad E_3:=\energy{\PP'}(\hh)
\end{gather*}
for the {\it sizes} and {\it energies}.

For $f$ and $g$, we recall the following ``selection algorythm'' -- see \cite[Lemma 3.9]{BB}:

\begin{lemma} \label{lemma:selection:f} Let $\QQ$ be a sub-collection of $\PP'$ and an integer $n_0$ such that $\size_{\QQ}(f)\leq 2^{-n_0} E_1$. Then one can extract a collection $\mathfrak{C}$ of columns such that 
\begin{itemize}
\item the remaining collection $\QQ':=\QQ\setminus (\bigcup_{\CC\in{\mathfrak{C}}} \CC)$ satisfies $\size_{\QQ'}(f)\leq 2^{-n_0-1} E_1$
\item the selected columns satisfy 
$$\sum_{\CC\in{\mathfrak{C}}} |I_{\CC}| \lesssim 2^{2n_0}.$$
\end{itemize}
\end{lemma}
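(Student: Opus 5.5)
The plan is the usual greedy selection of columns, run against the definition of $\energy_{\PP'}(f)$. Set $\lambda := 2^{-n_0-1}E_1$. As long as the current collection contains a tri-tile $s$ with $|\langle f,\phi_{s_1}\rangle|\,|I_s|^{-1/2} > \lambda$, pick, among all such tiles, one whose tile $s_1 = I_s\times\omega_{s_1}$ is extremal. Concretely, I take $|I_s|$ maximal, and break ties by choosing $\omega_{s_1}$ in a fixed order (for instance the leftmost frequency interval first, as in Fefferman/Lacey--Thiele type selections). Make it a top $t$, and remove the whole column
$$\CC_t := \{s\in\QQ_{\rm current} : I_s\subseteq I_t,\ \omega_{t_1}\subseteq\omega_{s_1}\}.$$
Iterate until no such tile remains. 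The first bullet is then automatic: in the remaining collection $\QQ'$ every tile satisfies $|\langle f,\phi_{s_1}\rangle|\,|I_s|^{-1/2}\le \lambda$, which is exactly $\size_{\QQ'}(f)\le 2^{-n_0-1}E_1$.

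For the counting bound I would verify that the selected columns, say $\mathfrak{C}=(\CC_j)_j$, form an admissible family in the definition of $\energy_{\PP'}(f)$ with the choice $2^{n}\sim\lambda$. Two conditions need checking.

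\emph{Top condition.} Each top $t_j$ satisfies $|\langle f,\phi_{t_{j,1}}\rangle|\,|I_{t_j}|^{-1/2}>\lambda$ by construction.

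\emph{Upper bound on the column.} Every $s\in\CC_j\subset\QQ$ obeys $|\langle f,\phi_{s_1}\rangle|\,|I_s|^{-1/2}\le \size_{\QQ}(f)\le 2^{-n_0}E_1 = 2\lambda$.

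So the dyadic pigeonholing $2^n\le\lambda<\dots\le 2^{n+1}$ fits the definition, up to losing at most one dyadic level. That loss is handled by splitting the tops into two classes according to the dyadic value of their coefficients. The remaining point is mutual disjointness: the sets $\CC_j$ are disjoint because tiles are removed once selected, and the top tiles $I_{\CC_j}\times\omega_{\CC_{j,1}}$ must be pairwise disjoint.

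Granting these, the definition of energy gives
$$\lambda\Big(\sum_j|I_{\CC_j}|\Big)^{1/2}\lesssim E_1,$$
hence
$$\sum_j |I_{\CC_j}|\lesssim E_1^2\lambda^{-2}\sim 2^{2n_0}.$$

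The main obstacle I expect is disjointness of the top tiles $t_{j,1}$. Suppose two tops $t_j,t_k$ (with $j<k$) have intersecting tiles. Since the tiles are dyadic with area one, they are then nested in the sense that $I_{t_k}\subseteq I_{t_j}$ and $\omega_{t_{j,1}}\subseteq\omega_{t_{k,1}}$, or the reverse. The maximality of $|I_t|$ in the selection order excludes $|I_{t_k}|>|I_{t_j}|$. Equal size with intersection forces the tiles to coincide, which is excluded. So $t_k$ is spatially smaller and frequency-larger, which would mean $t_k\in\CC_{t_j}$. It would therefore already have been removed, a contradiction.

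There is a subtlety: columns here are defined through $\omega_{s_1}$ only, with the tri-tile geometry in the background. This is exactly the situation of \cite[Lemma 3.9]{BB}, and the argument transfers verbatim, since it only uses $f$-coefficients and the column structure, not the strips. If the selection order by $|I_t|$ alone fails to give disjointness, I would fall back to the standard two-step refinement. First select by maximal spatial scale, then, among tiles of equal scale, by extremal frequency position. This makes the selected top tiles $t_{j,1}$ pairwise disjoint, and the energy bound above then concludes.
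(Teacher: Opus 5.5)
Your route is the standard greedy selection behind \cite[Lemma 3.9]{BB}, which is all the paper invokes for this lemma. Your disjointness argument is correct, and it needs no tie-breaking or fallback. For dyadic tiles of area one, $s\in\CC_t$ is equivalent to $s_1\cap t_1\neq\emptyset$ together with $|I_s|\le |I_t|$. So a later top whose tile meets an earlier top tile has, by maximality of the scale, already been removed with the earlier column. The equal-scale case is not excluded a priori: two distinct tri-tiles can share the same tile $s_1$ and differ only in $\omega_{s_2}$. It is, however, covered by this same containment.

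The step that fails as written is the dyadic bookkeeping. In the definition of $\energy_{\PP'}(f)$, the upper bound $|\langle f,\phi_{s_1}\rangle|\,|I_s|^{-1/2}\le 2^{n+1}$ is imposed on every tile of every column, not only on the tops. Take $2^n\le\lambda<2^{n+1}$.
\begin{itemize}
\item Tops with coefficient $\ge 2^{n+1}$ are admissible at level $n+1$, since every tile of $\QQ$ has coefficient $\le 2\lambda<2^{n+2}$.
\item A top with coefficient in $(\lambda,2^{n+1})$ may have a column $\CC_t$ containing tiles with coefficient in $(2^{n+1},2\lambda]$. Such a column is admissible neither at level $n$ (the upper bound fails) nor at level $n+1$ (the top condition fails). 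Splitting the tops into two classes therefore does not by itself cure the problem.
\end{itemize}

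The repair is easy, because the energy only sees $|I_{\CC}|$ and the coefficient of the top. In the comparison with the energy, though not in the extraction itself, replace each $\CC_{t_j}$ by the singleton $\{t_j\}$. This is a column of top $t_j$, and these singletons are mutually disjoint by the previous paragraph. Alternatively, trim $\CC_{t_j}$ to its tiles with coefficient $\le 2^{n+1}$. The extracted columns remain the full $\CC_{t_j}$, so the first bullet is unaffected. Each class of tops is now admissible at level $n$ or $n+1$ respectively. Since $2^n>\lambda/2$, this gives $\sum_j |I_{t_j}|\le 2E_1^2\,2^{-2n}\lesssim 2^{2n_0}$.
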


We have a similar lemma for the function $g$ with extracting a collection of rows -- see \cite[Lemma 3.10]{BB}:

\begin{lemma} \label{lemma:selection:g} Let $\QQ$ be a sub-collection of $\PP'$ and an integer $n_0$ such that $\size_{\QQ}(g)\leq 2^{-n_0} E_2$. Then one can extact a collection $\mathfrak{R}$ of rows such that 
\begin{itemize}
\item the remaining collection $\QQ':=\QQ\setminus (\bigcup_{\RR\in{\mathfrak{R}}} \RR)$ satisfies $\size_{\QQ'}(g)\leq 2^{-n_0-1} E_2$
\item the selected rows satisfy 
$$\sum_{\RR\in{\mathfrak{R}}} |I_{\RR}| \lesssim 2^{2n_0}.$$
\end{itemize}
\end{lemma}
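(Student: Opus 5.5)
The plan is to run the standard greedy stopping-time selection of \cite[Lemma 3.10]{BB}, the mirror of Lemma \ref{lemma:selection:f} with the roles of the first and second frequency coordinates exchanged. Set $\QQ_0:=\QQ$. As long as some tri-tile $s\in\QQ_0$ satisfies $|\langle g,\phi_{s_2}\rangle|\,|I_s|^{-1/2}>2^{-n_0-1}E_2$, we pick among these tri-tiles one, $t$, that is extremal in frequency: its interval $\omega_{t_2}$ is chosen furthest to the left (or right, after splitting $\QQ$ into two halves according to the relative position of $\omega_{t_2}$ and $\omega_{t_1}$, as in the usual BHT tree selection), with $|I_t|$ maximal among ties. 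We then define the row $\RR_t$ as all $s\in\QQ_0$ with $I_s\subseteq I_t$ and $\omega_{t_2}\subseteq\omega_{s_2}$, which is a row of top $t$ in the sense of the Definition. We remove $\RR_t$ from $\QQ_0$ and iterate. Since $\QQ$ is finite the process stops. When it does, every remaining tile has $g$-coefficient at most $2^{-n_0-1}E_2|I_s|^{1/2}$, which is exactly the first bullet.

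For the counting bound, we group the selected rows according to the dyadic value $2^{m}$ of the top coefficient, $|\langle g,\phi_{t_2}\rangle|/|I_t|^{1/2}\in[2^m,2^{m+1})$, where $2^m\sim 2^{-n_0}E_2$. By the hypothesis $\size_\QQ(g)\le 2^{-n_0}E_2$, every tile of every selected row has coefficient at most $2^{-n_0}E_2\le 2^{m+1}$ (up to a harmless shift by one in $m$). The key point is that the extremal choice makes the selected rows mutually disjoint: their tiles are disjoint because each is removed once, and the tops $(I_{\RR}\times\omega_{\RR,2})$ are disjoint tiles. Indeed, if two tops overlapped, the later one would lie under the earlier, with $\omega$ contained and $I$ contained, and hence would already have been absorbed into the earlier row. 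So the family is admissible in the definition of $\energy_{\QQ}(g)\le E_2$ with this $n=m$, giving
$$2^{m}\Big(\sum_{\RR\in\mathfrak{R}}|I_{\RR}|\Big)^{1/2}\le E_2,$$
hence $\sum_{\RR}|I_{\RR}|\lesssim 2^{-2m}E_2^2\lesssim 2^{2n_0}$. This is the claimed bound with the paper's normalization, where $E_2$ is absorbed or normalized consistently with Lemma \ref{lemma:selection:f}. Only $O(1)$ dyadic values of $m$ occur, so summing over them costs a constant.

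The main obstacle is verifying the mutual disjointness of the tops. This is where the extremal (leftmost or rightmost $\omega_{t_2}$) choice and the ordering by $|I_t|$ enter, and where one must check that a later top cannot be nested in an earlier one. Everything else is a direct application of the energy definition. Since this is precisely \cite[Lemma 3.10]{BB} and the row structure here is unchanged, I would follow that argument verbatim.
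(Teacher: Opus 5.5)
Your overall scheme (greedily extract rows whose tops have coefficient above $2^{-n_0-1}E_2$, then test the family against the definition of $E_2$) is the standard argument of \cite[Lemma 3.10]{BB}, which the paper only cites. However, the one step that carries real content, the disjointness of the top tiles $I_{\RR}\times\omega_{\RR,2}$, does not follow from your selection rule.

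Suppose the top tiles of an earlier top $t$ and a later top $t'$ intersect. Since the tiles are dyadic and $|I|\cdot|\omega|=1$, there are two cases.
\begin{itemize}
\item $I_{t'}\subseteq I_t$ and $\omega_{t_2}\subseteq\omega_{t'_2}$. Then $t'\in\RR_t$, and your absorption argument applies.
\item $I_t\subsetneq I_{t'}$ and $\omega_{t'_2}\subsetneq\omega_{t_2}$. Absorption does not reach this case: $t'\notin\RR_t$ because $I_{t'}\not\subseteq I_t$, and $t$ is already removed when $t'$ is processed.
\end{itemize}
Your rule uses frequency position as the primary key and $|I_t|$ only to break ties, so the second case is not excluded. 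Take $\omega_{t'_2}$ to be the right half of $\omega_{t_2}$. Its left endpoint and centre lie further right, so $t$ is chosen first. Then $t'$ survives, is later chosen as a top, and $I_t\times\omega_{t'_2}$ lies in both top tiles. The resulting family is not admissible in the definition of the energy. Your rule happens to work only if ``leftmost'' is measured by the right endpoint (or ``rightmost'' by the left endpoint), since then every strict dyadic subinterval is preferred. You do not check this; you defer it to \cite{BB}.

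The fix is simpler than what you propose. At each step, take a candidate with $|I_t|$ maximal, and drop the frequency extremality and the splitting into halves. That device is for Lacey--Thiele strong disjointness, which is not part of the notion of mutually disjoint rows used here. Any later top $t'$ was already a candidate when $t$ was chosen, so $|I_{t'}|\le|I_t|$. Hence only the first case can occur, and absorption excludes it.

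A smaller point concerns the ``harmless shift''. Group the tops by coefficient in $[2^m,2^{m+1})$. In the group with $2^m<2^{-n_0-1}E_2$, rows may contain tiles with coefficient in $(2^{m+1},2^{-n_0}E_2]$. This violates the constraint $\le 2^{m+1}$ in the energy definition, and no shift of $m$ repairs it. Instead, remove from those rows the tiles with coefficient $>2^{m+1}$. Each top has coefficient $<2^{m+1}$, so it stays, and hence so does $I_{\RR}$; the family remains mutually disjoint. Applying the energy definition with $n=m$ then gives $\sum_{\RR}|I_{\RR}|\le 2^{-2m}E_2^2\lesssim 2^{2n_0}$.
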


We aim to have a similar lemma for the sequence $\hh$ -- see \cite[Lemma 3.11]{BB} (that we adapt to our new definition of {\it energy} in this current setting):

\begin{lemma} \label{lemma:selection:h}
Let $\QQ$ be a sub-collection of $\PP'$ and an integer $n_0$ such that $\size_{\QQ}(\hh)\leq 2^{-n_0} E_3$. Then one can extact a collection $\mathfrak{R}$ of rows and a collection $\mathfrak{C}$ of columns (both mutually disjoint) such that 
\begin{itemize}
\item the remaining collection $\QQ':=\QQ\setminus (\bigcup_{\RR\in{\mathfrak{R}}} \RR \cup \bigcup_{\CC\in{\mathfrak{C}}} \CC)$ satisfies $\size_{\QQ'}(\hh)\leq 2^{-n_0-1} E_3$
\item the selected rows and columns satisfy 
$$ \sum_{\RR\in{\mathfrak{R}}} |I_{\RR}| + \sum_{\CC\in{\mathfrak{C}}} |I_{\CC}| \lesssim 2^{2 n_0}.$$
\end{itemize}
\end{lemma}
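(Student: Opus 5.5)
The plan is the standard greedy selection, run directly against the definition of $\energy_{\PP'}(\hh)$. The one new feature compared with Lemmas \ref{lemma:selection:f} and \ref{lemma:selection:g} is that rows and columns are treated simultaneously.

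\emph{Selection.} Set $\QQ_0:=\QQ$. At each step, look in the current collection for a tree $\TT$ (row or column) satisfying
$$\frac{1}{|I_{\TT}|} \sum_{S\in{\mathcal S}} \sum_{s\in \TT \atop \omega_s\cap S\neq\emptyset} |I_s|^{\alpha r'} |\langle h_S,\phi_{s_3}\rangle|^{r'} > \big(2^{-n_0-1}E_3\big)^{r'}.$$
If none exists, stop; by the definition of $\size(\hh)$ (second form, in the Remark following it), the remaining collection then has size at most $2^{-n_0-1}E_3$. Otherwise, pick such a tree by the usual extremal rule, e.g. a column whose top has $\omega_{t_1}$ extremal in frequency or a row whose top has $\omega_{t_2}$ extremal (the same ordering as in \cite[Lemmas 3.9, 3.10]{BB}), among those with maximal $|I_{\TT}|$. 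Enlarge it to the maximal column (resp.\ row) with the same top inside the current collection; this only increases the sum, so the inequality still holds. Remove it and iterate. Since $\QQ$ is finite, the process terminates, and the first bullet follows.

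\emph{Disjointness.} Next I verify that the selected columns are mutually disjoint and the selected rows are mutually disjoint in the sense of \cite[Definition 2.4]{BB}. As sets of tri-tiles they are disjoint because each tree is removed once chosen. The disjointness of the tops $I_{\CC}\times\omega_{\CC,1}$, and likewise for rows, is where the extremal ordering and the maximal enlargement enter. This is the same geometric argument as in \cite{BB}: if two selected column tops overlapped, the later one would already have been absorbed into the earlier maximal column. Columns are only required to be disjoint among columns and rows among rows, so the mixed selection causes no new difficulty.

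\emph{Counting.} Having $\mathfrak{R}\cup\mathfrak{C}$ with each tree satisfying the displayed inequality with $2^{n}\approx 2^{-n_0-1}E_3$, I split the forest into the row part and the column part. Each part is an admissible forest in the definition of $\energy_{\PP'}(\hh)$; more simply, the forest $\mathfrak{R}\cup\mathfrak{C}$ is itself allowed, since that definition ranges over forests of mutually disjoint rows and mutually disjoint columns. Taking $n$ with $2^n\le 2^{-n_0-1}E_3<2^{n+1}$ gives
$$2^{-n_0-2}E_3\Big(\sum_{\TT\in\mathfrak{R}\cup\mathfrak{C}}|I_{\TT}|\Big)^{1/2}\le \energy_{\PP'}(\hh)=E_3,$$
hence $\sum|I_{\TT}|\lesssim 2^{2n_0}$. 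The case $E_3=0$ is trivial.

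\emph{Main obstacle.} The main obstacle is the disjointness of the tops. The size functional for $\hh$ mixes rows and columns, and the sum over $s\in\TT$ is not monotone in the usual tile order. I would therefore check carefully that enlarging a selected tree to the maximal row or column with that top preserves the lower bound (it does, since the summands are nonnegative), and that the extremal choice forces later tops of the same type to be disjoint from earlier ones. If this fails, the fallback is to run two separate passes, first extracting rows and then columns, each with its own ordering. The counting argument is unaffected by this change.
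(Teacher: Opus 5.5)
Your proposal is correct and is essentially the argument the paper intends. The paper gives no proof, only a pointer to the greedy selection of \cite[Lemma 3.11]{BB} adapted to the new $L^2$-type energy: repeatedly pick a row or column of maximal $|I_{\TT}|$ whose density exceeds $(2^{-n_0-1}E_3)^{r'}$, enlarge it to the maximal tree with that top, remove it, and bound $\sum|I_{\TT}|$ directly by the definition of $E_3$. The frequency-extremal tie-breaking and the two-pass fallback are unnecessary. Disjointness of the tops only uses the maximality of $|I_{\TT}|$ together with the fact that a tree's density depends only on the tree and its top, so any tree selected later was already a candidate at every earlier step.
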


By iterating these three previous ``selection lemmas'', one has the following decomposition:

\begin{proposition} \label{prop:decomposition} Let fix an arbitrary collection $\PP'\subset \PP$ of tri-tiles. Then we have a decomposition
$$  \PP'= \bigcup_{n\in{\mathbb Z}} \QQ_n$$
where $\QQ_{n}$ is given by the union of a forest of rows $\FF_{n}^{1}$ and a forest of columns $\FF_{n}^2$ and satisfy
\begin{itemize}
\item Structural decomposition: $\QQ_n = (\bigcup_{\RR \in \FF_n^1} \RR) \cup (\bigcup_{\CC \in \FF_n^2} \CC)$; 
\item Control of the size
\begin{align*}
\size_{\QQ_n}(f) \leq \min(2^{-n} E_1,S_1), \quad \size_{\QQ_n}(g) \leq \min(2^{-n} E_2,S_2), \quad \size_{\QQ_n}(\hh) \leq \min(2^{-n} E_3,S_3);
\end{align*}
\item Control of the forests
$$ \sum_{\RR\in\FF_{n}^1} |I_{\RR}|\lesssim 2^{2n} \qquad \textrm{and} \qquad \sum_{\CC\in\FF_{n}^2} |I_{\CC}|\lesssim 2^{2n}.$$
\end{itemize}
Moreover $\FF_{n}^1$ is empty if $n$ is such that
$$2^{-n} E_1 \geq 2 S_1 \qquad \textrm{and} \qquad 2^{-n} E_3 \geq 2 S_3.$$
Similarly, $\FF_{n}^2$ is empty if $n$ is such that
$$2^{-n} E_2 \geq 2 S_2 \qquad \textrm{and} \qquad 2^{-n} E_3 \geq 2 S_3.$$
\end{proposition}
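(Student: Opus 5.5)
The decomposition is obtained by a stopping-time iteration on the three sizes, run in the usual decreasing order of the normalized thresholds. Choose $n_{\min}$ as the largest integer with $2^{-n_{\min}}E_j \geq S_j$ for $j=1,2,3$; such an integer exists because each $S_j\lesssim E_j$, the size being dominated by the energy of a single tree. Set $\PP'_{n_{\min}}:=\PP'$. Now assume a remaining collection $\PP'_n$ is given with
$$\size_{\PP'_n}(f)\leq 2^{-n}E_1,\quad \size_{\PP'_n}(g)\leq 2^{-n}E_2,\quad \size_{\PP'_n}(\hh)\leq 2^{-n}E_3.$$

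We apply the three selection lemmas one after another to $\PP'_n$.
\begin{itemize}
\item Lemma \ref{lemma:selection:f}, with $\QQ=\PP'_n$ and $n_0=n$, gives columns $\mathfrak{C}^f_n$ with $\sum|I_\CC|\lesssim 2^{2n}$ and a remainder on which the $f$-size is $\leq 2^{-n-1}E_1$.
\item Lemma \ref{lemma:selection:g} is then applied to that remainder. This is allowed because removing tiles cannot increase a size, as each size is a supremum over the sub-collection. It gives rows $\mathfrak{R}^g_n$ with the same total-length bound.
\item Lemma \ref{lemma:selection:h} is applied last. It gives rows $\mathfrak{R}^h_n$ and columns $\mathfrak{C}^h_n$, again with total length $\lesssim 2^{2n}$.
\end{itemize}
The remaining collection $\PP'_{n+1}$ then satisfies the three size bounds with $n+1$ in place of $n$, since each size has been halved and never increased afterwards. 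We set
$$\FF^1_n:=\mathfrak{R}^g_n\cup\mathfrak{R}^h_n,\qquad \FF^2_n:=\mathfrak{C}^f_n\cup\mathfrak{C}^h_n,\qquad \QQ_n:=\PP'_n\setminus \PP'_{n+1}.$$
The structural decomposition holds by construction, and the forest bounds hold with a constant three times larger. The size bounds on $\QQ_n\subset\PP'_n$ are $2^{-n}E_j$, and these are also $\leq S_j$ because $\QQ_n\subset\PP'$. For $n<n_{\min}$ we set $\QQ_n=\emptyset$. Since $\PP$ is finite and each tile has nonzero coefficients, only finitely many tiles survive to large $n$, so $\bigcup_n\QQ_n=\PP'$. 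Tiles with zero coefficients can be discarded or put into any $\QQ_n$ without harm.

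For the emptiness claim, suppose $2^{-n}E_1\geq 2S_1$ and $2^{-n}E_3\geq 2S_3$. Then the size of $f$ on $\PP'_n$ is already $\leq S_1\leq 2^{-n-1}E_1$, so the selection step for $f$ can be run with nothing selected. The same holds for $\hh$. The rows of $\FF^1_n$ come from the $g$- and $\hh$-selections, so emptiness of $\FF^1_n$ would require the $g$- and $\hh$-conditions. The stated pairing (the $f$- and $\hh$-conditions for $\FF^1_n$, the $g$- and $\hh$-conditions for $\FF^2_n$) would instead match row selections for $f$ and column selections for $g$, the opposite of the lemmas as stated. I would follow the structure of \cite{BB}, where the roles depend on which of the functions enjoys the lacunary $\ell^2$ control in Proposition \ref{prop:tree}, and relabel if needed. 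The argument itself is identical.

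The main point to check is that in each lemma the selection algorithm selects nothing when the current size is already below half the threshold. This is immediate from the greedy construction, which selects a tree only if it violates the target size bound. The second point to check is the monotonicity of the sizes under removal of tiles, including for $\size(\hh)$, which is a supremum over trees contained in the collection. Both are straightforward. The only delicate part is bookkeeping the mutual disjointness of the selected forests, and this is inherited from the lemmas.
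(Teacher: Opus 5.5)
Your construction is the paper's. Its proof is the single sentence that the decomposition follows by iterating Lemmas \ref{lemma:selection:f}, \ref{lemma:selection:g} and \ref{lemma:selection:h}. Your induction fills this in correctly: sizes are monotone under removal of tiles, you take the empty selection when a size is already below half its threshold, and you merge the selected rows and columns level by level.

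Your objection to the last clause is justified. With the lemmas as stated, rows come only from the $g$- and $\hh$-selections, and columns only from the $f$- and $\hh$-selections. So the iteration gives the following: $\FF^1_n=\emptyset$ if $2^{-n}E_2\geq 2S_2$ and $2^{-n}E_3\geq 2S_3$, and $\FF^2_n=\emptyset$ if $2^{-n}E_1\geq 2S_1$ and $2^{-n}E_3\geq 2S_3$.

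The printed pairing appears to be not only unproved but false in general. Suppose $S_1/E_1$ and $S_3/E_3$ are very small, while $g$ is a modulated bump on an interval $I$ whose large coefficients fill one row crossing $K$ strips. The printed clause then forces these tiles into columns, at the $O(1)$ levels with $2^{-n}\sim S_2/E_2\sim |I|^{-1/2}$. A column contains tiles of that row at only one scale. Along the row, tiles of different scales have disjoint $\omega_{s_1}$, because their frequency squares are disjoint while the $\omega_{s_2}$ are nested. Along a column, by contrast, all the $\omega_{s_1}$ contain a common interval. So these columns need total length $\gtrsim K|I|$, against a budget $\lesssim |I|$.

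The same interchanged constraint is used for the column sum $(I)$ in the proof of Proposition \ref{prop:estimation}. There it should read $2^{-n}\leq 2\max(S_1/E_1,S_3/E_3)$.

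One justification in your write-up is wrong, although nothing depends on it. The bound $S_j\lesssim E_j$ fails in general: sizes are $L^\infty$-type and energies $L^2$-type quantities. A single tree $\{t\}$ only gives $E_1\gtrsim |\langle f,\phi_{t_1}\rangle|$, which is the normalized coefficient times $|I_t|^{1/2}$. The starting level exists for a trivial reason instead. If $E_j>0$, then $2^{-n}E_j\geq S_j$ for every sufficiently negative $n$. If $E_j=0$, all the corresponding coefficients vanish.
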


\section{Boundedness of the trilinear forms}

We first prove the following generic estimate: we consider a fix sub-collection of tri-tiles $\PP'\subset \PP$, measurable subsets $F,G,H$ and functions $f\in X(F)$, $g\in X(G)$, $\hh\in X(H)$. For each of them, as in the previous section, we denote the global {\it sizes} and {\it energies} (with respect to $\PP'$) by $S_j,E_j$, $j=1,2,3$.

\begin{proposition} \label{prop:estimation} For $\PP'$ a sub-collection of tri-tiles, we have: for every $\theta_1,\theta_2,\theta_3\in[0,1]$ with $\theta_1+\theta_2+\theta_3=1$ then
\begin{align*}
 \Lambda_{\PP'}(f,g,\hh) & \lesssim \ssize_{\PP'}({\bf 1}_G)^{1/r} \cdot S_1^{2\alpha \theta_1} E_1^{1-2\alpha \theta_1} \cdot S_2^{2\alpha \theta_2} E_2^{2\alpha(1-\theta_2)} \cdot S_3^{2\alpha \theta_3} E_3^{1-2\alpha \theta_3} \\
 & + \ssize_{\PP'}({\bf 1}_F)^{1/r} \cdot S_1^{2\alpha \theta_1} E_1^{2\alpha(1-\theta_1)} \cdot S_2^{2\alpha \theta_2} E_2^{1-2\alpha \theta_2} \cdot S_3^{2\alpha \theta_3} E_3^{1-2\alpha \theta_3}.
 \end{align*}
\end{proposition}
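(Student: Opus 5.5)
We plan to combine the decomposition of Proposition \ref{prop:decomposition} with the column/row estimate of Proposition \ref{prop:tree}, and then sum over $n$ geometrically. Apply Proposition \ref{prop:decomposition} to $\PP'$ to write $\PP'=\bigcup_n \QQ_n$, where $\QQ_n$ is the union of the forest of rows $\FF_n^1$ and the forest of columns $\FF_n^2$. This gives
$$\Lambda_{\PP'}(f,g,\hh)\leq \sum_n \Big(\sum_{\CC\in\FF_n^2}\Lambda_{\CC}+\sum_{\RR\in\FF_n^1}\Lambda_{\RR}\Big).$$
The column sums will produce the first term of the claimed bound, and the row sums the second, by symmetry. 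We describe only the columns.

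For a column $\CC\in\FF_n^2$ we apply Proposition \ref{prop:tree} with $\QQ_n$ in place of $\PP'$. This gives $\Lambda_\CC\lesssim \size_{\QQ_n}(f)\,\size_{\QQ_n}(g)^{2\alpha}\,A_\CC^{1-2\alpha}\,\size_{\QQ_n}(\hh)\,|I_\CC|$, with
$$A_\CC:=\Big(|I_\CC|^{-1}\sum_{s\in\CC}|\langle g,\phi_{s_2}\rangle|^2\Big)^{1/2}.$$
The $\ell^2$ factor $A_\CC$ cannot be bounded by the size of $g$ alone; this is exactly what produces $\ssize_{\PP'}({\bf 1}_G)^{1/r}$. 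Along a column the tiles $s_2$ are disjoint, so Bessel-type orthogonality with the localized wave packets gives $A_\CC\lesssim (|I_\CC|^{-1}\int|g|^2\chi_{I_\CC})^{1/2}\lesssim \ssize_{\PP'}({\bf 1}_G)^{1/2}$, since $|g|\le{\bf 1}_G$. Since $(1-2\alpha)/2=1/r$, the factor $A_\CC^{1-2\alpha}$ is bounded by $\ssize_{\PP'}({\bf 1}_G)^{1/r}$, which is the prefactor in the first term. The remaining power $2\alpha$ of the $g$-size is bounded by $\min(2^{-n}E_2,S_2)^{2\alpha}$.

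Next we sum over $\CC$, using $\sum_{\CC\in\FF_n^2}|I_\CC|\lesssim 2^{2n}$. The term of scale $n$ is then bounded by
$$\ssize({\bf 1}_G)^{1/r}\,\min(2^{-n}E_1,S_1)\,\min(2^{-n}E_2,S_2)^{2\alpha}\,\min(2^{-n}E_3,S_3)\,2^{2n}.$$
The normalization in the selection lemmas has to be read as $2^{2n}$ relative to the energies (as in \cite{BB}); if necessary, the tree-mass bound should be tracked as $\lesssim 2^{2n}$ with sizes compared to $2^{-n}E_j$. We interpolate each minimum using $\min(a,b)\le a^{1-\lambda}b^{\lambda}$, choosing the exponents
\begin{itemize}
\item[(i)] $S_1^{2\alpha\theta_1}(2^{-n}E_1)^{1-2\alpha\theta_1}$ for $f$,
\item[(ii)] $S_2^{2\alpha\theta_2}(2^{-n}E_2)^{2\alpha(1-\theta_2)}$ for $g$,
\item[(iii)] $S_3^{2\alpha\theta_3}(2^{-n}E_3)^{1-2\alpha\theta_3}$ for $\hh$.
\end{itemize}
The total power of $2^{-n}$ is $2-2\alpha(\theta_1+\theta_2+\theta_3)=2-2\alpha$. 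This is not exactly $2$, so the geometric series does not collapse by itself.

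The main obstacle is therefore the summation in $n$. We would handle it as in \cite{BB}: split the $n$-range using the emptiness statement at the end of Proposition \ref{prop:decomposition}. The forest $\FF_n^2$ is empty once $2^{-n}E_2\ge 2S_2$ and $2^{-n}E_3\ge 2S_3$, so only $n$ with $2^{-n}\lesssim \max(S_2/E_2,S_3/E_3)$ contribute. For large $n$ we use the pure energy bound, where the minima are bounded by $2^{-n}E_j$ and the exponent of $2^{-n}$ is favourable. For small $n$ we use the sizes, where the factors are bounded by $S_j$ and the exponent is positive. The weights $\theta_j$ are distributed so that each regime is a convergent geometric sum whose extremal term is bounded by the product above. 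Concretely, we bound $\sum_n\prod_j\min(2^{-n}E_j,S_j)^{\cdot}2^{2n}$ by its value at the crossover scales, and use $\theta_1+\theta_2+\theta_3=1$ together with $\alpha>0$ to guarantee strict decay on both sides. We expect to lose only constants depending on $\alpha$.

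The rows are treated identically with the roles of $f$ and $g$ swapped. The $\ell^2$ factor is now in $f$, and gives $\ssize_{\PP'}({\bf 1}_F)^{1/r}$; the exponents follow the second line of Proposition \ref{prop:tree}, reading its $\size(f)^{\alpha}$ as $2\alpha$ for consistency. This yields the second term of the estimate.
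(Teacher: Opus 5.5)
Your setup matches the paper's: Proposition~\ref{prop:decomposition}, the column bound of Proposition~\ref{prop:tree} with the $\ell^2$ factor controlled by orthogonality by $\ssize_{\PP'}({\bf 1}_G)^{1/r}$, and $\sum_{\CC\in\FF_n^2}|I_{\CC}|\lesssim 2^{2n}$. These reduce the column part to
\[
\sum_{n} 2^{2n}\min(2^{-n}E_1,S_1)\,\min(2^{-n}E_2,S_2)^{2\alpha}\,\min(2^{-n}E_3,S_3).
\]
The gap is in summing this series, and the mechanism you propose cannot work.

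First, your exponent count is wrong: $(1-2\alpha\theta_1)+2\alpha(1-\theta_2)+(1-2\alpha\theta_3)=2$ exactly. So after the termwise interpolation (i)--(iii), every scale $n$ contributes precisely the target product, independently of $n$. Since nothing restricts $n\to+\infty$, the series diverges. No choice of the $\theta_j$ produces decay in $n$, so ``distributing the weights so that each regime is a convergent geometric sum'' is not available.

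Second, a split into only two regimes does not suffice. Use all energies for $2^{-n}\le T$ and all sizes for $2^{-n}\ge T$, and write $a=S_1/E_1$, $b=S_2/E_2$, $c=S_3/E_3$. This gives $E_1E_2^{2\alpha}E_3\,(T^{2\alpha}+ab^{2\alpha}c\,T^{-2})$, whose minimum over $T$ is $(ab^{2\alpha}c)^{\alpha/(1+\alpha)}$. That proves the estimate only for the single triple $\theta=(\frac{1}{2(1+\alpha)},\frac{\alpha}{1+\alpha},\frac{1}{2(1+\alpha)})$. The reason is that the all-size bound is far too large on the intermediate scales.

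The missing idea is that the $\theta_j$ should enter only after the sum has been computed. Write $t=2^{-n}$; the summand is $E_1E_2^{2\alpha}E_3\,F(t)$ with $F(t)=t^{-2}\min(t,a)\min(t,b)^{2\alpha}\min(t,c)$. Order $a,b,c$ and, in each range of $t$, use the branch of each minimum that is actually active; this is the paper's case analysis, written out under $a\le b\le c$.
\begin{itemize}
\item[(1)] Below $m:=\min(a,b,c)$ one has $F(t)=t^{2\alpha}$, which sums to $\lesssim m^{2\alpha}$ because $\alpha>0$.
\item[(2)] Above $m$, $F$ is non-increasing and piecewise a power $t^{2\alpha-1}$, $t^{-1}$, $t^{2\alpha-2}$ or $t^{-2}$ (using $2\alpha<1$). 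Each range is therefore dominated by its left endpoint, where $F\le F(m)=m^{2\alpha}$.
\end{itemize}
Hence the column part is $\lesssim E_1E_2^{2\alpha}E_3\, m^{2\alpha}$. Only now does one use $m^{2\alpha}\le a^{2\alpha\theta_1}b^{2\alpha\theta_2}c^{2\alpha\theta_3}$, valid since $\sum_j\theta_j=1$, which gives exactly the first term of the statement.

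There is one exception to your claimed ``strict decay on both sides''. If $b$ is the smallest ratio, then $F\equiv b^{2\alpha}$ on $[b,\min(a,c)]$, which produces a factor $\log(\min(a,c)/b)$. This factor is absorbed by $b^{2\alpha\theta_2}\min(a,c)^{2\alpha(1-\theta_2)}$ only when $\theta_2<1$; symmetrically, the rows need $\theta_1<1$. The paper writes out only the ordering $a\le b\le c$, where this issue does not arise. It later uses only $\theta_j\in(0,1)$, so the point is harmless there, but this argument does not reach the endpoints $\theta_1=1$ or $\theta_2=1$.
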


\begin{proof} It is an application of the decomposition -- Proposition \ref{prop:decomposition}:
$$ \Lambda_{\PP'}(f,g,\hh) = \sum_{n\in \mathbb{Z}} \Lambda_{\QQ_n}(f,g,\hh)$$
and for every $n\in {\mathbb Z}$
$$ \Lambda_{\QQ_n}(f,g,\hh) = \sum_{\RR \in \FF_n^1} \Lambda_{\RR}(f,g,\hh) + \sum_{\CC\in \FF_n^2} \Lambda_{\CC}(f,g,\hh).$$
For every column $\CC\in \FF_n^2$, Proposition \ref{prop:tree} gives
$$ \Lambda_{\CC}(f,g,\hh) \lesssim \size_{\QQ_n}(f) \cdot \size_{\QQ_n}(g)^{2\alpha} \cdot \ssize_{\PP'}({\bf 1}_G)^{1/r} \cdot \size_{\QQ_n}(\hh) \cdot |I_{\CC}|$$
and similarly for every row $\RR\in\FF_n^1$
$$ \Lambda_{\RR}(f,g,\hh) \lesssim \ssize_{\PP'}({\bf 1}_F)^{1/r} \cdot \size_{\QQ_n}(f)^{2\alpha} \cdot \size_{\QQ_n}(g) \cdot \size_{\QQ_n}(\hh) \cdot |I_{\RR}|.$$
Here we used (as in \cite[Proposition 2.7]{BB}) that by orthogonality along a column or a row, we have
$$ \frac{1}{|I_{\CC}|} \sum_{s\in\CC} |\langle g,\phi_{s_2}\rangle|^2 \lesssim \frac{1}{|I_{\CC}|}\int |g(x)|^2 \chi_{I_{\CC}}(x) dx \lesssim \ssize_{\PP'}({\bf 1}_{G})$$
and similarly
$$ \frac{1}{|I_{\RR}|} \sum_{s\in\RR} |\langle f,\phi_{s_1}\rangle|^2 \lesssim \frac{1}{|I_{\RR}|}\int |f(x)|^2 \chi_{I_{\RR}}(x) dx \lesssim \ssize_{\PP'}({\bf 1}_{F}).$$

So it suffices now to estimate the two components
$$ (I):= \sum_{n\in{\mathbb Z}} \sum_{\CC\in \FF_n^2} \size_{\QQ_n}(f) \cdot \size_{\QQ_n}(g)^{2\alpha} \cdot \size_{\QQ_n}(\hh) \cdot |I_{\CC}| $$
and
$$ (II):= \sum_{n\in{\mathbb Z}} \sum_{\RR\in \FF_n^1} \size_{\QQ_n}(f)^{2\alpha} \cdot \size_{\QQ_n}(g) \cdot \size_{\QQ_n}(\hh) \cdot |I_{\RR}|. $$

Let us focus on $(I)$, since the second one is symmetric. By the property given by the ``selection algorythms'' in Proposition \ref{prop:decomposition}, one has
\begin{align*}
(I) & \lesssim  \sum_{n\in{\mathbb Z}} \sum_{\CC\in \FF_n^2} \min(2^{-n}E_1,S_1) \cdot \min(2^{-n}E_2,S_2)^{2\alpha} \cdot \min(2^{-n}E_3,S_3) \cdot |I_{\CC}| \\
& \lesssim \sum_{n\in{\mathbb Z}}  2^{2n} \cdot \min(2^{-n}E_1,S_1) \cdot \min(2^{-n}E_2,S_2)^{2\alpha} \cdot \min(2^{-n}E_3,S_3) \\
& \lesssim E_1 E_2^{2\alpha} E_3 \cdot \sum_{n\in{\mathbb Z}}  2^{2n} \cdot \min(2^{-n},\frac{S_1}{E_1}) \cdot \min(2^{-n},\frac{S_2}{E_2})^{2\alpha} \cdot \min(2^{-n},\frac{S_3}{E_3}) \\
\end{align*}
and the summation is under the constraint
$$ 2^{-n} E_2 \leq 2 S_2 \qquad \textrm{or} \qquad 2^{-n} E_3 \leq 2 S_3$$
which can be written as
$$ 2^{-n}\leq 2\max( \frac{S_2}{E_2}, \frac{S_3}{E_3}).$$ 

To compute the geometric sum in $n$, we need to compare the three quantities $\frac{S_1}{E_1}$, $\frac{S_2}{E_2}$ and $\frac{S_3}{E_3}$. 
Aiming that, let us work under the assumption
\begin{equation} \frac{S_1}{E_1} \leq \frac{S_2}{E_2} \leq \frac{S_3}{E_3} \label{eq:ass}
\end{equation}
and we let the reader to check the other situations (which can be dealt with in a very similar way).
We will consider the sum in $(I)$, split into several components (still denoted by $(I)$):

\begin{itemize}
\item when $2^{-n} \leq \frac{S_1}{E_1}$, then
\begin{align*}
 (I) & \lesssim E_1 E_2^{2\alpha} E_3 \cdot \sum_{n}  2^{2n} \cdot 2^{-n} \cdot 2^{-2\alpha n} \cdot 2^{-n} \lesssim E_1 E_2^{2\alpha} E_3 \cdot \sum_{n}  2^{-2\alpha n} \\
 & \lesssim E_1 E_2^{2\alpha} E_3 \cdot \big(\frac{S_1}{E_1}\big)^{2\alpha} \\
 & \lesssim E_1 E_2^{2\alpha} E_3 \cdot \big(\frac{S_1}{E_1}\big)^{2\alpha\theta_1} \cdot \big(\frac{S_2}{E_2}\big)^{2\alpha \theta_2} \cdot \big(\frac{S_3}{E_3}\big)^{2\alpha \theta_3} \\
 & \lesssim S_1^{2\alpha \theta_1} E_1^{1-2\alpha \theta_1} \cdot S_2^{2\alpha \theta_2} E_2^{2\alpha-2\alpha \theta_2} \cdot S_3^{2\alpha \theta_3} E_3^{1-2\alpha \theta_3}.
\end{align*}

\item when $\frac{S_1}{E_1} \leq 2^{-n} \leq \frac{S_2}{E_2}$, then
\begin{align*}
 (I) & \lesssim E_1 E_2^{2\alpha} E_3 \cdot \sum_{n}  2^{2n} \cdot \frac{S_1}{E_1} \cdot 2^{-2\alpha n} \cdot 2^{-n} \lesssim S_1 E_2^{2\alpha} E_3 \cdot \sum_{n}  2^{(1-2\alpha) n} \\
 & \lesssim S_1 E_2^{2\alpha} E_3 \cdot \big(\frac{E_1}{S_1}\big)^{(1-2\alpha)} 
\end{align*}
since $\alpha\in(0,\frac{1}{2})$. So
\begin{align*} 
(I) & \lesssim E_1 E_2^{2\alpha} E_3 \cdot \big(\frac{S_1}{E_1}\big)^{2\alpha}
\end{align*}
and we recover the same estimates as previously.

\item when $\frac{S_2}{E_2} \leq 2^{-n} \leq 2\frac{S_3}{E_3}$, then
\begin{align*}
 (I) & \lesssim E_1 E_2^{2\alpha} E_3 \cdot \sum_{n}  2^{2n} \cdot \frac{S_1}{E_1} \cdot \big(\frac{S_2}{E_2}\big)^{2\alpha} \cdot 2^{-n} \lesssim S_1 S_2^{2\alpha} E_3 \cdot \sum_{n}  2^{n} \\
 & \lesssim S_1 S_2^{2\alpha} E_3 \cdot \frac{E_2}{S_2} \lesssim S_1 S_2^{2\alpha} E_3 \cdot \big(\frac{E_2}{S_2}\big)^{2\alpha}\big(\frac{E_2}{S_2}\big)^{1-2\alpha}  \\
  & \lesssim S_1 E_2^{2\alpha} E_3 \cdot \big(\frac{E_1}{S_1}\big)^{1-2\alpha} \lesssim E_1 E_2^{2\alpha} E_3 \cdot \big(\frac{S_1}{E_1}\big)^{2\alpha}
   \end{align*}
and we recover again the same estimate as previously.
\end{itemize}

This completes the proof of the estimate for $(I)$ under \eqref{eq:ass}. The other situations (permutation of the quantities in \eqref{eq:ass}) can be studied similarly, as well as the second term $(II)$ which is symmetric.
\end{proof}

From that estimate, as explained in \cite[Sections 5 and 6]{BB}, in order to get the widest (from such estimates) range, it is necessary to go through an extra localization step. This extra step, detailed in \cite{BB} could nowadays be explained through the {\it sparse domination} point of view (see \cite{BeneaMuscaluSparse}). It was introduced in \cite{BeneaMuscalu} when it has been observed that up to a localization, the {\it energy} could be 'transformed' into a power of {\it size}. We donot detail and we refer the reader to \cite{BB}, we will only sketch the argument to track the exponents.

\bigskip

\begin{theorem} Let $r>2$ be fixed arbitrarily close to $2$ and denote $\alpha:=\frac{1}{2}-\frac{1}{r}>0$. Then for all exponents $p_1,p_2 \in(1,\infty)$ and $p\in(\frac{1}{2},\infty)$ such that $\frac{1}{p}=\frac{1}{p_1}+\frac{1}{p_2}$ and 
$$ \frac{1}{p_1},\frac{1}{p_2},1-\frac{1}{p} < \frac{1}{2}+2\alpha^2$$ the trilinear form $\Lambda_{\PP}$ satisfies the restricted weak-type estimates $(\frac{1}{p_1},\frac{1}{p_2},1-\frac{1}{p})$ with a control (uniform with respect to $\PP$)
$$ \|\Lambda_{\PP}\|_{[\frac{1}{p_1},\frac{1}{p_2},1-\frac{1}{p}]} \lesssim (\sharp {\mathcal S})^{\frac{3}{2} \alpha}.$$
\end{theorem}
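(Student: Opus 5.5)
We combine Proposition \ref{prop:estimation} with the size estimates (Proposition \ref{prop:size}) and energy estimates (Propositions \ref{prop:energy1} and \ref{prop:energy2}), after a localization step as in \cite[Sections 5 and 6]{BB}. Fix $F,G,H$ and assume by scaling that $|H|=1$ when $\nu_3<0$ is the ``bad'' index. In general we remove from $H$ the exceptional set
$$\Omega:=\{M({\bf 1}_F)\gtrsim |F|/|H|\}\cup\{M({\bf 1}_G)\gtrsim |G|/|H|\},$$
so that $H':=H\setminus\Omega$ is a major subset. We then split $\PP$ according to the dilation parameter $d\geq 0$ with $1+\mathrm{dist}(I_s,\Omega^c)/|I_s|\simeq 2^d$. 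For tiles in $\PP_d$, the decay of $\phi_{s_3}$ against $\hh\in X(H')$ gives $S_3\lesssim(\sharp\mathcal S)^{\alpha}2^{-Md}$. The spatial sizes satisfy $\ssize({\bf 1}_F)\lesssim 2^{d}|F|/|H|$ and $\ssize({\bf 1}_G)\lesssim 2^d|G|/|H|$, and the same bounds hold for $S_1,S_2$.

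Inside each $\PP_d$ we localize further, as in the sparse-domination viewpoint of \cite{BeneaMuscalu}. We decompose $\PP_d$ into maximal spatial intervals $J$ and apply Proposition \ref{prop:estimation} on each localized piece $\PP_{d,J}$. By the local energy estimates,
$$E_1\lesssim \|{\bf 1}_F\chi_J\|_2\lesssim (2^d|F|/|H|)^{1/2}|J|^{1/2},$$
with the same bound for $E_2$ in terms of $G$. For $\hh$, Proposition \ref{prop:energy2} gives
$$E_3\lesssim(\sharp\mathcal S)^{\alpha}\|{\bf 1}_{H'}\chi_J\|_2\lesssim(\sharp\mathcal S)^\alpha 2^{-Md}|J|^{1/2}.$$
Since every energy is then a size to the power $1/2$ times $|J|^{1/2}$, the right-hand side of Proposition \ref{prop:estimation} becomes
$$|J|\cdot(\sharp\mathcal S)^{\alpha(1-2\alpha\theta_3)+2\alpha^2\theta_3+\cdots}\;\prod_j(\text{size}_j)^{a_j}.$$
The total power of $\sharp\mathcal S$ is at most $\frac32\alpha$: it collects $\alpha$ from $S_3^{2\alpha\theta_3}E_3^{1-2\alpha\theta_3}$, and extra factors of order $\alpha/2$ come from the $r'$-averages in the size for $\hh$ and from the sizes when $\theta$'s are redistributed. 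Summing over $J$ uses $\sum_J|J|\lesssim|\Omega|\lesssim|H|$ at scale $2^d$ (or $|F|,|G|$ depending on the term). Summing over $d$ uses the factor $2^{-Md}$ against the polynomial losses $2^{Cd}$.

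It remains to read off the exponents. For the first term the powers of $\ssize({\bf 1}_G)$ and ${\bf 1}_F$ are
$$a_G=\tfrac1r+\alpha\theta_2+\alpha(1-\theta_2)\cdot 2\alpha\cdots,\qquad a_F=\tfrac12+\alpha\theta_1\cdot(\ldots),$$
and similarly for $a_H$. We want $|F|^{\nu_1}|G|^{\nu_2}|H|^{\nu_3}$ with $\nu_1=a_F$, $\nu_2=a_G$, $\nu_3=1-a_F-a_G$. Letting $(\theta_1,\theta_2,\theta_3)$ range over the simplex, the achievable $(\nu_1,\nu_2)$ fill a region around $(\frac12,\frac12)$ and its permutations. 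The computation to carry out is that this region contains all $\nu$ with $\max(\nu_1,\nu_2,\nu_3)<\frac12+2\alpha^2$: the deviation from $\frac12$ is governed by $2\alpha\theta_j$ times the gap $\frac12-\frac1r=\alpha$, giving quadratic room $2\alpha^2$. This explains the $\epsilon^2$-neighbourhood once we set $\epsilon=\frac32\alpha$, and $2\alpha^2=\frac89\epsilon^2$ matches the stated $V_\epsilon$. When $\nu_3\le\frac12$ no exceptional set is needed ($H'=H$) and the argument is simpler. For the remaining corners, where $\nu_1$ or $\nu_2$ exceeds $\frac12$, we remove the exceptional set defined through $F$ or $G$ instead, swapping roles.

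The main obstacle is the exponent bookkeeping in this last step. We must verify that for every target $\nu$ in the $2\alpha^2$-neighbourhood there exist admissible $\theta_j$ making both terms of Proposition \ref{prop:estimation} produce the correct powers simultaneously, with nonnegative powers of the dilation factors so that the sum in $d$ converges. We must also check that the $\sharp\mathcal S$ loss never exceeds $\frac32\alpha$. To do this, I would first treat the vertex cases $\theta_j=1$ explicitly, which give the extremal deviations $\pm2\alpha^2$ in each coordinate, and then conclude by convexity (interpolating the restricted weak-type bounds).
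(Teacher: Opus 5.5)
There is a genuine gap. The localization you describe is too weak to reach the stated range, and the closing ``bookkeeping plus convexity'' step cannot be carried out.

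You decompose $\PP_d$ into maximal intervals $J$ and bound the sizes by their global values at level $d$. Then Proposition \ref{prop:estimation} gives at best $|J|\min(1,2^d|F|)^{\frac12+\alpha\theta_1}\min(1,2^d|G|)^{\frac12+\alpha\theta_2}2^{-M'd}$. That is fine for $d\ge1$. For $d=0$, however, your claim $\sum_J|J|\lesssim|\Omega|$ is false, since these tiles sit next to $\Omega^c$. That piece therefore only sees the global energies $E_1\lesssim|F|^{1/2}$ and $E_2\lesssim|G|^{1/2}$.

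Concretely, take $|G|=|H|=1\ll|F|$. Then $\Omega=\emptyset$ and $\PP=\PP_0$, and the first term of Proposition \ref{prop:estimation} gives $|F|^{\frac12-\alpha\theta_1}$. But $\nu=(0.1,0.45,0.45)$, which lies in the local-$L^2$ range, requires $|F|^{0.1}$.

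In general your scheme gives restricted weak type only for $\nu_1,\nu_2$ within $O(\alpha)$ of $\frac12$, i.e.\ near $(\frac12,\frac12,0)$. Letting $\theta$ run over the simplex, vertices included, never leaves this neighbourhood, so convexity adds nothing.

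Nor can you ``swap roles'' as in the BHT argument. The statement requires the major subset to lie inside $H$. Moreover, Proposition \ref{prop:estimation} is not symmetric in $(f,g,\hh)$: the $\hh$-factor always carries $E_3^{1-2\alpha\theta_3}$ and the $(\sharp\mathcal S)^\alpha$ loss, never the small exponent $2\alpha(1-\theta)$. So there is no estimate near $(\frac12,0,\frac12)$ or $(0,\frac12,\frac12)$ to interpolate with. Similarly, dropping $\Omega$ when $\nu_3\le\frac12$ leaves the two terms of Proposition \ref{prop:estimation} with two different, rigid exponent triples.

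The missing ingredient is the three-fold stopping-time localization of \cite[Section 6]{BB} that the paper uses. For each $i$ and $n_i\ge0$ there are disjoint intervals $\mathcal J_i^{n_i}$ with averages $\sim2^{-n_i}$. Hence $\sum_{I\in\mathcal J_1^{n_1}}|I|\lesssim2^{n_1}|F|$, and similarly for $G$ and $H'$. On these intervals the local sizes are $\le2^{-n_i}$ (resp.\ $2^{-n_3/r'}$ for $\hh$).

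One applies Proposition \ref{prop:estimation} on $\PP_d(I_1)\cap\PP_d(I_2)\cap\PP_d(I_3)$, with energies $\lesssim2^{-n_i/2}|I_1\cap I_2\cap I_3|^{1/2}$. This gives the bound $(\sharp\mathcal S)^{\alpha(1-2\alpha\theta_3)}2^{-(\frac12+\alpha\theta_1)n_1}2^{-(\frac12+\alpha\theta_2)n_2}2^{-(\frac12+2\alpha^2\theta_3)n_3}|I_1\cap I_2\cap I_3|$. One then sums using $\sum|I_1\cap I_2\cap I_3|\lesssim(2^{n_1}|F|)^{\gamma_1}(2^{n_2}|G|)^{\gamma_2}2^{n_3\gamma_3}$, with a second free parameter $\gamma$ in the simplex.

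It is $\gamma$ that decouples $\nu_1,\nu_2$ from $\frac12$. One only needs $\gamma_1<\nu_1\le\frac12+\alpha\theta_1$, $\gamma_2<\nu_2\le\frac12+\alpha\theta_2$ and $\gamma_3<\tilde\nu_3\le\frac12+2\alpha^2\theta_3$. This system is solvable whenever $\max_i\nu_i<\frac12+2\alpha^2$.

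The $\frac32\alpha$ loss also comes from this step, not from the heuristic $\alpha/2$ contributions you mention. The extra factor $(\sharp\mathcal S)^{\alpha(\tilde\nu_3-\gamma_3)}$ arises from summing in $n_3$ over the range $2^{-n_3}\lesssim2^{-Md}(\sharp\mathcal S)^{\alpha}$.
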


\begin{proof}
So we fix arbitrary measurable subsets $F,G,H$ (of finite measure) and functions $f\in X(F)$, $g\in X(G)$ and by homogeneity one can assume $|H|=1$. Then we define the 'usual' exceptional set
$$ \Omega:=\{x,\ M({\bf 1}_{F})(x) \geq c_0 |F|\} \cup \{x,\ M({\bf 1}_{G})(x)> c_0 |G|\}$$
for a numerical constant $c_0$, sufficient large such that $|\Omega|\leq \frac{1}{2}$ and then we set $H':=H\setminus \Omega$ a major subset of $H$.
Then we consider also functions $\hh\in X(H')$ and we decompose the initial collection $\PP:=\bigcup_{d\geq 0} \PP_d$ defined as
$$ \PP_d:=\{s\in \PP, 2^d \leq 1+\frac{d(I_s,\Omega^c)}{|I_s|}\leq 2^{d+1} \}.$$

We refer the reader to \cite[section 6]{BB}, to the following localization step. For integers $n_1,n_2,n_3\geq 0$, there exist $({\mathcal J}_i^{n_i})_{i=1,2,3}$ three collections of disjoint dyadic intervals and for each of these intervals $I\in {\mathcal J}_i^{n_i}$, there is a corresponding collection $\PP_d(I)$ such that
\begin{itemize}
\item[(a)] Control of the averages:
$$ 2^{-n_1} \leq \frac{1}{|I|}\int {\bf 1}_{F}(x) \chi_{I}(x) dx \leq 2^{-n_1}$$
and similarly for $n_2$ with $G$ and $n_3$ with $H'$;
\item[(b)] Control of the local {\it sizes}: for every $I\in {\mathcal J}_1^{n_1}$
$$ \size_{\PP_d(I)}(f) + \ssize_{\PP_d(I)}({\bf 1}_F) \leq 2^{-n_1}$$
and similarly for $n_2$ with $g,G$ and  for $n_3$ and every $I\in {\mathcal J}_3^{n_3}$
$$ \size_{\PP_d(I)}(\hh) + \ssize_{\PP_d(I)}({\bf 1}_{H'})^{1/r'} \leq 2^{-n_3/r'};$$
\item[(c)] Partitions of $\PP_d$: for $i=1,2,3$
$$ \PP_d= \bigcup_{n_i\geq 0} \bigcup_{I\in{\mathcal J}_i^{n_i}} \PP_d(I).$$
\end{itemize}

The fact that we have
$$ \size_{\PP_d(I)}(\hh)\leq 2^{-n_3/r'} \qquad \textrm{and} \qquad \ssize_{\PP_d(I)}({\bf 1}_{H'}) \leq 2^{-n_3}$$
comes from the fact that $\size_{\PP_d(I)}(\hh)$ can be estimated by maximal $L^{r'}$-averages (and not maximal $L^1$-averages as for $f,g$) -- see Proposition \ref{prop:size}.

Then one has
$$ \Lambda_{\PP_d}(f,g,\hh) = \sum_{n_1,n_2,n_3} \sum_{I_1\in{\mathcal J}_1^{n_1} \atop {I_2\in{\mathcal J}_2^{n_2} \atop I_3\in{\mathcal J}_3^{n_3}}} \Lambda_{\PP_d(I_1,I_2,I_3)}(f,g,\hh)$$
where
$$\PP_d(I_1,I_2,I_3)=\PP_d(I_1) \cap \PP_d(I_2) \cap \PP_d(I_3)$$
and the integers $n_i$ are such that (because they have to be controlled by the {\it size} of $\PP_d$) 
$$ 2^{-n_1}\lesssim 2^d |F|, \qquad 2^{-n_2} \lesssim 2^d|G|, \qquad 2^{-n_3} \lesssim 2^{-Md} \cdot (\sharp {\mathcal S})^{\alpha}$$
where $M$ is as large as we want. Indeed it is standard that because of the definition of the exceptional subset and of $\PP_d$ then
$$ \size_{\PP_d}(f) \leq 2^{d} |F|, \qquad \size_{\PP_d}(g) \leq 2^{d} |G|, \qquad \size_{\PP_d}(\hh) \leq 2^{-Md} \cdot (\sharp {\mathcal S})^{\alpha}.$$ 

For every $I_1,I_2,I_3$, we apply Proposition \ref{prop:estimation} (with the fact that now the {\it energies} which are controlled by local$-L^2$ averages are then bounded by $2^{-n_i/2}|I_1 \cap I_2 \cap I_3|^{1/2}$ for $i=1,2,3$):
\begin{align*}
 \Lambda_{\PP_d(I_1,I_2,I_3)}(f,g,\hh) & \lesssim 2^{-n_2/r} \cdot 2^{-2\alpha \theta_1 n_1} 2^{-(1-2\alpha \theta_1) n_1/2} |I_1 \cap I_2 \cap I_3|^{(1-2\alpha \theta_1)/2} \\ 
& \cdot 2^{-2\alpha \theta_2 n_2} 2^{-2\alpha(1-\theta_2) n_2/2} |I_1 \cap I_2 \cap I_3|^{2\alpha(1- \theta_2)/2} \\
&  \cdot 2^{-2\alpha \theta_3 n_3/r'} (\sharp {\mathcal S})^{\alpha(1-2\alpha \theta_3)} 2^{-(1-2\alpha \theta_3) n_3/2} |I_1 \cap I_2 \cap I_3|^{(1-2\alpha \theta_3)/2} \\
 & + 2^{-n_1/r} \cdot 2^{-2\alpha \theta_1 n_1} 2^{-2\alpha(1-\theta_1) n_1/2} |I_1 \cap I_2 \cap I_3|^{2\alpha(1-\theta_1)/2} \\ 
& \cdot 2^{-2\alpha \theta_2 n_2} 2^{-(1-2\alpha\theta_2) n_2/2} |I_1 \cap I_2 \cap I_3|^{(1- 2\alpha \theta_2)/2} \\
&  \cdot 2^{-2\alpha \theta_3 n_3/r'} (\sharp {\mathcal S})^{\alpha(1-2\alpha \theta_3)} 2^{-(1-2\alpha \theta_3) n_3/2} |I_1 \cap I_2 \cap I_3|^{(1-2\alpha \theta_3)/2}.
 \end{align*}
That gives us (since $\theta_1+\theta_2+\theta_3=1$)
\begin{align*}
 \Lambda_{\PP_d(I_1,I_2,I_3)}(f,g,\hh) & \lesssim (\sharp {\mathcal S})^{\alpha(1-2\alpha \theta_3)} \cdot 2^{-(\frac{1}{2}+\alpha \theta_1) n_1}  \cdot 2^{-(\frac{1}{2}+\alpha \theta_2) n_2} \cdot 2^{-(\frac{1}{2}+2\alpha^2 \theta_3) n_3} \cdot
 |I_1 \cap I_2 \cap I_3|.
 \end{align*}
Then we will have to sum over $n_1,n_2,n_3$ such that 
\begin{equation} 2^{-n_1} \lesssim \min(1,2^d|F|), \qquad 2^{-n_2}\lesssim \min(1,2^d|G|), \qquad 2^{-n_3}\lesssim 2^{-Md} \cdot (\sharp {\mathcal S})^{\alpha} \label{eq:ni} \end{equation}
because all the {\it sizes} are also bounded by a constant and so we get that as soon as 
$0\leq \nu_1 \leq \frac{1}{2}+\alpha \theta_1$, $0\leq \nu_2 \leq \frac{1}{2}+\alpha \theta_2$ and $0\leq \tilde \nu_3 \leq \frac{1}{2}+2\alpha^2 \theta_3$ we have
\begin{align*}
 \Lambda_{\PP_d(I_1,I_2,I_3)}(f,g,\hh)  \lesssim (\sharp {\mathcal S})^{\alpha(1-2\alpha \theta_3)} \cdot 2^{-\nu_1 n_1}  \cdot 2^{-\nu_2 n_2} \cdot 2^{- \tilde\nu_3 n_3} \cdot
 |I_1 \cap I_2 \cap I_3|.
 \end{align*}
Then we have to sum over the intervals  
$I_i\in {\mathcal J}_i^{n_i}$. Due to Property (a) above of the selected intervals ${\mathcal J}_i^{n_i}$, we have that
$$ \sum_{I_1\in{\mathcal J}_1^{n_1}} |I_1| \lesssim 2^{n_1} |F|$$
and similarly for $i=2,3$, due to the disjointness. And indeed we also have
$$ \sum_{I_1\in{\mathcal J}_1^{n_1} \atop {I_2\in{\mathcal J}_2^{n_2} \atop I_3\in{\mathcal J}_3^{n_3}}} |I_1\cap I_2 \cap I_3| \lesssim \min\Big(\sum_{I_1\in{\mathcal J}_1^{n_1}} |I_1|,\sum_{I_2\in{\mathcal J}_2^{n_2}} |I_2|,\sum_{I_3\in{\mathcal J}_3^{n_3}} |I_3|\Big). $$
So for $\gamma_i\in[0,1]$ with $\gamma_1+\gamma_2+\gamma_3=1$, one has
$$ \sum_{I_1\in{\mathcal J}_1^{n_1} \atop {I_2\in{\mathcal J}_2^{n_2} \atop I_3\in{\mathcal J}_3^{n_3}}} |I_1\cap I_2 \cap I_3| \lesssim (2^{n_1}|F|)^{\gamma_1} (2^{n_2}|G|)^{\gamma_2} 2^{n_3\gamma_3}. $$
Hence finally,
\begin{align*}
 \sum_{I_1\in{\mathcal J}_1^{n_1} \atop {I_2\in{\mathcal J}_2^{n_2} \atop I_3\in{\mathcal J}_3^{n_3}}} \Lambda_{\PP_d(I_1,I_2,I_3)}(f,g,\hh)  \lesssim (\sharp {\mathcal S})^{\alpha(1-2\alpha \theta_3)} \cdot 2^{(\gamma_1-\nu_1) n_1}  \cdot 2^{(\gamma_2-\nu_2) n_2} \cdot 2^{(\gamma_3- \tilde \nu_3) n_3} \cdot
 |F|^{\gamma_1} |G|^{\gamma_2}.
 \end{align*}
We can then sum over $n_i$ under the condition \eqref{eq:ni} with $\nu_i> \gamma_i$ and in such a case we have
$$ \Lambda_{\PP_d}(f,g,\hh) \lesssim (\sharp {\mathcal S})^{\alpha(1-2\alpha \theta_3+\tilde \nu_3-\gamma_3)} \cdot 2^{-M'd}|F|^{\nu_1} |G|^{\nu_2}$$
for some numerical exponent $M'$ (depending of all the exponents). Since $\alpha\in(0,\frac{1}{2})$ then
$$ 1-2\alpha \theta_3+\tilde \nu_3-\gamma_3 \leq 1-2\alpha \theta_3 + \frac{1}{2}+\alpha \theta_3 \leq \frac{3}{2}$$
and so we deduce that
\begin{equation} \Lambda_{\PP}(f,g,\hh) = \sum_{d\geq 0} \Lambda_{\PP}(f,g,\hh) \lesssim (\sharp {\mathcal S})^{\frac{3}{2}\alpha} \cdot |F|^{\nu_1} |G|^{\nu_2}. \label{eq:td}
\end{equation}
That corresponds to the desired restricted weak-type estimates for the exponents $(\nu_1,\nu_2,\nu_3)$ with $\nu_3:=1-(\nu_1+\nu_2)$.

The constraints (that are needed in the previous computations), are
$$ 0\leq \nu_1\leq \frac{1}{2}+\alpha \theta_1, \qquad 0\leq \nu_2\leq \frac{1}{2}+\alpha \theta_2,$$
and
$$ \gamma_1<\nu_1, \qquad \gamma_2<\nu_2, \qquad \gamma_3<\tilde \nu_3 \leq \frac{1}{2}+2\alpha^2 \theta_3$$
with $\theta_i,\gamma_i\in[0,1)$ and 
$$ \theta_1+\theta_2+\theta_3=\gamma_1+\gamma_2+\gamma_3=1.$$

In particular, these contraints are satisfied if
$$ 0< \nu_1,\nu_2 < \frac{1}{2}+2\alpha^2 \qquad \textrm{and} \qquad \nu_3 < \frac{1}{2}+2\alpha^2.$$
Indeed for such exponents, one has
$$ \frac{\nu_i-\frac{1}{2}}{2\alpha^2} <1 \qquad \textrm{and}  \qquad \sum_{i=1}^3 \frac{\nu_i-\frac{1}{2}}{2\alpha^2} = -\frac{1}{4\alpha^2} <1.$$
So we can find admissible exponents $\theta_i\in(0,1)$ such that for $i=1,2,3$
$$ \nu_i <\frac{1}{2} +2\alpha^2 \theta_i$$
and since $\alpha\in(0,\frac{1}{2})$, 
$$ 0\leq \nu_1\leq \frac{1}{2}+2\alpha^2 \theta_1< \frac{1}{2}+\alpha \theta_1, \qquad 0\leq \nu_2\leq \frac{1}{2}+2\alpha^2 \theta_2 < \frac{1}{2}+\alpha \theta_2.$$
Then by chosing $\tilde \nu_3 \in (\max(\nu_3,0), \frac{1}{2} +2\alpha^2 \theta_3)$, one has $\nu_1,\nu_2,\tilde \nu_3 \in(0,1)$ with 
$\nu_1+\nu_2+\tilde \nu_3 >1$ and so there exists $\gamma_i\in(0,1)$ such that $0<\gamma_1<\nu_1$, $0<\gamma_2<\nu_2$, $0<\gamma_3<\tilde \nu_3$ with $\gamma_1+\gamma_2+\gamma_3=1$. So we have found suitable exponents verifying all the conditions, in order to make the previous computations valid, which concludes the proof of the statement with $\nu_1=\frac{1}{p_1}$, $\nu_2=\frac{1}{p_2}$ and $\nu_3=1-\frac{1}{p}$.
\end{proof}

\begin{remark} If we follow \cite{BeneaMuscaluSparse} which explains how such localization algorythm can give a {\it sparse domination} (indeed for $i=1,2,3$, the collection $\bigcup_{n_i\geq 0} {\mathcal J}_i^{n_i}$ is {\it sparse}), we can then prove that the trilinear form $\Lambda_{\PP}$ satisfies the following {\it sparse domination}: for $f,g,\hh$ there exists a {\it sparse} collection of intervals $(I)_{I}$ such that
$$ |\Lambda(f,g,\hh)| \lesssim (\sharp {\mathcal S})^{\frac{3}{2} \alpha} \cdot \sum_{I} \left(\aver{I} |f|^{r_1}\right)^{\frac{1}{r_1}} \cdot \left(\aver{I} |g|^{r_2}\right)^{\frac{1}{r_2}} \cdot \left(\aver{I} |\hh|^{r_3}\right)^{\frac{1}{r_3}} \cdot |I| $$
for exponents $r_1,r_2,r_3$ corresponding to
$$ \frac{1}{r_1}:=\frac{1+\alpha \theta_1}{2}, \qquad \frac{1}{r_2}:=\frac{1+\alpha \theta_2}{2} \qquad \textrm{and} \qquad \frac{1}{r_3}:=\frac{1}{2}+2\alpha^2 \theta_3.$$
We know that such {\it sparse domination} implies boundedness from $L^{p_1} \times L^{p_2}$ to $L^{p}$ for exponents $p_1,p_2,p$ such that $\frac{1}{p_1}+\frac{1}{p_2}=\frac{1}{p}$ and
$$ r_1<p_1, \qquad r_2<p_2 \qquad \textrm{and} \qquad r_3< p',$$
which allows to recover the exact same range of boundedness (up to the use $2\alpha\leq 1$ for a simplification of the conditionn that we also used in the previous proof).
\end{remark}

\begin{remark} In the final statement, we put a loss of order $(\sharp {\mathcal S})^{\frac{3}{2} \alpha}$ and the $\frac{3}{2}$ comes from \eqref{eq:td}. If we do things a bit more carefully we could reach an exponent $1$.
\end{remark}

\end{document}